\numberwithin{equation}{section}
\theoremstyle{plain}
\newtheorem{theorem}{Theorem}[section]
\newtheorem{corollary}[theorem]{Corollary}
\newtheorem{proposition}[theorem]{Proposition}
\newtheorem{lemma}[theorem]{Lemma}
\theoremstyle{remark}
\newtheorem{remark}[theorem]{Remark}
\newtheorem{question}[theorem]{Question}
\theoremstyle{definition}
\newcommand{\R}{\mathbb{R}}
\newcommand{\Rn}{\R^n}
\newcommand{\N}{\mathbb{N}}
\newcommand{\PP}{\mathcal{P}}
\newcommand{\eps}{\varepsilon}
\newcommand{\roo}{\varrho}
\newcommand{\HH}{\mathcal{H}}
\newcommand{\yli}[2]{\genfrac{}{}{0pt}{1}{#1}{#2}}
\newcommand{\ud}[2]{\overline{D}_{h}(#1,#2)}        
\newcommand{\ld}[2]{\underline{D}_{h}(#1,#2)}       
\newcommand{\uds}[2]{\overline{D}_{h_s}(#1,#2)}       
\newcommand{\lds}[2]{\underline{D}_{h_s}(#1,#2)}      
\DeclareMathOperator{\dimp}{dim_p}
\DeclareMathOperator{\dist}{dist}
\DeclareMathOperator{\diam}{diam}
\DeclareMathOperator{\por}{por}
\begin{document}

\title[Conical upper density theorems and porosity]%
{Conical upper density theorems and porosity of measures}

\author{Antti K\"aenm\"aki}
\author{Ville Suomala} 

\address{Department of Mathematics and Statistics \\
         P.O. Box 35 (MaD) \\
         FI-40014 University of Jyv\"askyl\"a \\
         Finland}

\email{antakae@maths.jyu.fi}
\email{visuomal@maths.jyu.fi}

\thanks{AK acknowledges the support of the Academy of Finland (project
  \#114821).}
\subjclass[2000]{Primary 28A75; Secondary 28A78, 28A15.}
\keywords{Conical upper density, porosity, finite lower density,
  packing measure.}
\date{\today}

\begin{abstract}
  We study how measures with finite lower density are
  distributed around $(n-m)$-planes in small balls in $\Rn$. We also
  discuss relations between conical upper density theorems and
  porosity. Our results may be
  applied to a large collection of Hausdorff and packing type measures.
\end{abstract}

\maketitle

\section{Introduction}

Conical density theorems are used in geometric measure theory to
derive geometric information from given metric
information. Classically, they deal with the distribution of the
$s$-dimensional Hausdorff measure, $\HH^s$. 
The main applications of conical density theorems concern
rectifiability, see \cite{ma}, but they have been applied
also elsewhere in geometric measure theory, for example, in
the study of porous sets, see \cite{ma2} and \cite{KS}.
The upper conical density results, going back to
Besicovitch \cite{Bes} and Marstrand \cite{Mar}, show that under
certain conditions
there is a lot of $A$ near each $k$-dimensional linear subspace of
$\R^n$ in some small balls $B(x,r)$.
Besides Besicovitch and
Marstrand, the theory of upper conical density theorems has been
developed by Morse and Randolph 
\cite{MR}, Federer \cite{Fe}, and Salli \cite{Sa}. For a partial
survey on various conical density theorems 
for measures on $\Rn$, consult \cite{vk}. A sample result is
the following (Salli \cite[Theorem 3.1]{Sa}): If
$V\in G(n,n-m)$, where $G(n,n-m)$ denotes the space of all
$(n-m)$-dimensional linear subspaces of $\Rn$, $0<\alpha<§1$,
$A\subset\R^n$, $0<\HH^s(A)<\infty$, and $s>m\ge 1$, then
\begin{equation}\label{thm:salli}
\limsup_{r\downarrow 0}\frac{\HH^s\bigl(A\cap
  X(x,r,V,\alpha)\bigr)}{(2r)^s}\geq c  
\end{equation}
for $\HH^s$-almost all $x\in A$, where $c>0$ is a constant depending
only on $n,m,s$, and $\alpha$. Here
\begin{equation*}
  X(x,V,r,\alpha) = \{ y \in B(x,r)\,:\, \dist(y-x,V) <
  \alpha|y-x|\},
\end{equation*}
where $B(x,r)\subset\Rn$ is the closed ball with center at $x$ and
radius $r>0$. Open balls are denoted by $U(x,r)$.
Clearly, \eqref{thm:salli} is not true anymore if $s\leq
m$ since in this case it might happen that $A\subset V^\perp$.

In \cite{ma2}, Mattila improved the above result
by showing that it is not necessary to fix $V$ in
\eqref{thm:salli}. More precisely, he proved that if $A\subset\R^n$,
$0<\HH^s(A)<\infty$, $s>m$, and $0<\alpha<1$, then for a constant
$c>0$ depending only on $n$, $m$, $s$, and $\alpha$, 
\begin{equation}\label{thm:mattila}
 \limsup\limits_{r\downarrow 0}\inf\limits_{C}\frac{\HH^s\bigl(A\cap
 B(x,r)\cap C_x\bigr)}{(2r)^s}\geq c
\end{equation}
for $\HH^s$-almost all $x \in A$, where $C_x = \{ x \} + \bigcup C$ and
the infimum is taken over all Borel sets $C\subset G(n,n-m)$ for
which $\gamma_{n,n-m}(C)>\alpha$. Here $\gamma_{n,n-m}$ denotes the unique 
Borel regular probability measure on $G(n,n-m)$ invariant under the
orthogonal group $O(n)$, see \cite[\S 3.9]{ma}. As an immediate
corollary to Mattila's result, under the same assumptions as in
\eqref{thm:salli}, we have
\begin{equation}\label{cor:mattila}
\limsup_{r\downarrow 0}\inf_{V\in G(n,n-m)} \frac{\HH^s\bigl(A\cap
  X(x,r,V,\alpha)\bigr)}{(2r)^s}\geq c  
\end{equation}
for $\HH^s$-almost all $x\in A$, where $c>0$ depends only on $n$, $m$,
$s$, and $\alpha$, see \cite[\S 11]{ma}. 
Although the constant in \eqref{thm:salli} is much better than that of
\eqref{cor:mattila}, still \eqref{cor:mattila} is a significant
improvement of \eqref{thm:salli}: It shows that in the sense of the
measure $\HH^s$, there are arbitrarily small scales such that almost all
points of $A$ are well surrounded by $A$. 

\begin{figure}
\psfrag{x}{$x$}
\psfrag{r}{$r$}
\psfrag{d}{$\delta$}
\begin{center}
\includegraphics[scale=0.8]{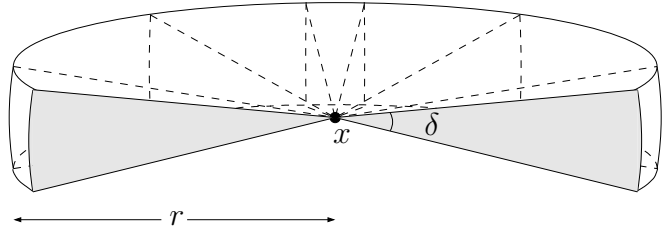}
\end{center}
\caption{The set $X(x,r,V,\alpha)\setminus H(x,\theta,\eta)$ when
  $n=3$, $m=1$, $\alpha=\sin(\delta/2)$, and $\theta$ is pointing
  up from the paper.}
\label{cones}
\end{figure}

In what follows, we shall also allow $m=0$, in which case
$G(n,n-m)=G(n,n)=\{\Rn\}$ and $X(x,r,\Rn,\alpha)=B(x,r)$.
If $\mu$ is a measure on $\Rn$
and $A\subset\Rn$, we use the notation $\mu|_A$ for the restriction
measure, that is $\mu|_A(B)=\mu(A\cap B)$ for $B\subset\Rn$.

The proof of \eqref{thm:mattila} is nontrivial and it is
based on Fubini-type arguments and an elegant use of the so-called
sliced measures. Since the geometry of the cones $X(x,r,V,\alpha)$ is
simpler than that of the cones $C_x$ in \eqref{thm:mattila},
it is natural to ask for an elementary proof of
\eqref{cor:mattila}. In \cite{KS}, such a proof was given and the
technique used there does not require the cones to be symmetric.
Namely, given $s>m$, $0<\alpha<1$, $0<\eta<1$, and
$A\subset\R^n$ with $0<\HH^s(A)<\infty$, it was shown in \cite[Theorem
2.5]{KS} that there is a constant $c>0$ depending only on
$n,m,s,\alpha$, and $\eta$ so that 
\begin{equation}\label{thm:ks}
\limsup_{r\downarrow 0}\inf_{\yli{\theta \in S^{n-1}}{V \in
    G(n,n-m)}} \frac{\HH^s\bigl(A\cap
  X(x,r,V,\alpha)\setminus H(x,\theta,\eta)\bigr)}{(2r)^s}\geq c  
\end{equation}
for $\HH^s$-almost all $x\in A$. Here
$S^{n-1}=\{x\in\R^n\,:\,|x|=1\}$ and 
\begin{equation*}
  H(x,\theta,\eta) = \{ y \in \R^n : (y-x) \cdot \theta > \eta|y-x| \}
\end{equation*}
is the almost half-space centered at $x$ pointing to the direction of
$\theta$ with the opening angle $0<\beta<\pi$ given by
$\cos(\beta/2)=\eta$. 

At first glance, the cones $X(x,r,V,\alpha)\setminus
H(x,\theta,\eta)$ may seem a bit artificial. Let us
look at some special cases. To help the geometrical visualization, it
might be helpful to take $\alpha$ and $\eta$ close to $0$ and
$\theta\in V\cap S^{n-1}$, see Figure \ref{cones}.
When $m=n-1$, the claim \eqref{thm:ks} is equivalent to
\begin{equation}\label{thm:1dim}
\limsup_{r\downarrow 0}\inf_{\varrho \in S^{n-1}}\frac{\HH^s\bigl(A\cap
  X^+(x,r,\varrho,\alpha)\bigr)}{(2r)^s}\geq c(n,s,\alpha)>0,  
\end{equation}
where 
\begin{align*}
X^+(x,r,\varrho,\alpha)&=
\{y\in
B(x,r)\,:\,(y-x)\cdot\varrho>(1-\alpha^2)^{1/2}|y-x|\}\\
&=B(x,r)\cap
H\bigl( x,\varrho,(1-\alpha^2)^{1/2} \bigr).
\end{align*} 
Since $X(x,r,V,\alpha)=X^+(x,r,\varrho,\alpha)\cup
X^+(x,r,-\varrho,\alpha)$ whenever $V=\{t\varrho\,:\,t\in\R\}\in G(n,1)$,
we see from \eqref{thm:1dim} that the cone $X(x,r,V,\alpha)$ in
\eqref{cor:mattila} may be
replaced by $X^+(x,r,\varrho,\alpha)$ when $m=n-1$. This case
was also considered in Mattila \cite{ma2}. 

When $0<m<n-1$, there is no
more natural way to divide the cones $X(x,r,V,\alpha)$ into two or more
similar parts, and we are led to replace the cones
$X^+(x,r,\varrho,\alpha)$ by 
$X(x,r,V,\alpha)\setminus H(x,\theta,\eta)$. However, the main reason
for considering the densities \eqref{thm:ks} in \cite{KS} comes from
porosity. Mattila's result \eqref{thm:1dim} implies
that the lower porosity of the measure $\HH^s|_A$ can not be too
close to the maximum value $\tfrac12$ when $s>n-1$. This leads into a
relatively sharp dimension estimate for lower porous sets with
porosity close to $\tfrac12$, see \cite{ma2} and \cite[\S 11]{ma}. In
a similar manner, the result \eqref{thm:ks} leads to a dimension
estimate for the so called $k$-porous sets, introduced in \cite{KS}. 

When $m=0$, the statement \eqref{thm:ks} is applicable to
all $0<s\leq n$ and reads
\begin{equation}\label{thm:0dim}
\limsup_{r\downarrow 0}\inf_{\theta \in S^{n-1}} \frac{\HH^s\bigl(A\cap
  B(x,r)\setminus H(x,\theta,\eta)\bigr)}{(2r)^s}\geq c(n,s,\eta)>0,
\end{equation}
thus showing that for almost all $x\in A$ the set $A$ (or the measure
$\HH^s|_A$) can not be
concentrated on almost half-balls $B(x,r)\cap H(x,\theta,\eta)$ for
all small scales. Easy
examples, such as $A=S^1\subset\R^2$, show that one can not replace
the almost 
half-spaces $H(x,\theta,\eta)$ by the half-spaces $H(x,\theta,0)$ in
\eqref{thm:0dim}. 

The statement \eqref{thm:ks} as well as its more general formulation
\cite[Theorem 2.6]{KS} deals with measures having finite upper density
with respect to some gauge function. In particular, they do not in
general apply
to packing type measures. Thus there is a need for upper conical
density theorems concerning measures with finite lower density and
(possibly) infinite upper density. In our main result, Theorem
\ref{thm:main}, we 
generalize the result \eqref{thm:ks} for measures 
with finite lower density with respect to an appropriate gauge. The
main application of this 
generalization, Corollary \ref{cor:packing}, is a
conical density theorem for the $s$-dimensional packing measure,
$\PP^s$. Our result may also
be applied to a large collection of Hausdorff and packing type measures
which are determined using a variety of gauges.
Besides the generalizations of \eqref{thm:salli} given in \cite{Su},
there seems to be no conical density theorems of a similar type in the
literature for other than Hausdorff measures.

Theorem \ref{thm:main} may be viewed as a dual result to the known lower
conical density theorems which tell roughly that under certain
conditions, we may find, around typical points, some small half balls
with almost no measure. See, for example, \cite[Theorem 2.1]{Su}.

In \S \ref{sec:poro}, we discuss connections between conical densities
and porosity. Namely, we show how conical density theorems may be used
to obtain 
upper bounds for the porosity of measures. We shall also discuss the
sharpness of our main result using this connection.
Finally, in \S \ref{sec:op} we pose some open problems.

We finish the introduction by setting down some notation. Throughout
the paper, we assume that $h$ is a positive function defined on some
small interval $(0,r_0)$. We shall also assume, for simplicity, that
$h$ is nondecreasing though this is not essential.
If $\mu$ is a Borel measure on $\Rn$ (i.e.\ an outer measure defined on
all subsets of $\Rn$ such that Borel sets are measurable) and
$x\in\Rn$, the upper and lower $\mu$-densities at $x$ 
with respect to $h$ are given by 
\begin{align*}
  \ud{\mu}{x} &= \limsup\limits_{r\downarrow
  0} \frac{\mu\bigl(B(x,r)\bigr)}{h(2r)}, \\
  \ld{\mu}{x} &= \liminf\limits_{r\downarrow
  0} \frac{\mu\bigl(B(x,r)\bigr)}{h(2r)}.
\end{align*} 
If $V\in G(n,m)$, $x\in\Rn$, and $\lambda>0$, we define
\begin{align*}
  V_x(\lambda) &= \{y\in\Rn\,:\,\dist(y-x,V)\leq\lambda\}.
\end{align*}

\section{Conical upper density theorems}

To prove our main result, Theorem \ref{thm:main}, we need the
following two geometrical lemmas. The first one is due to Erd\H{o}s
and F\"uredi \cite{EF}, see also \cite[Lemma 2.1]{KS}.

\begin{lemma} \label{lemma:EF}
  For a given $0<\beta<\pi$, there is $q=q(n,\beta) \in \N$ such that in
  any set of $q$ points in $\R^n$, there are always three points which
  determine an angle between $\beta$ and $\pi$.
\end{lemma}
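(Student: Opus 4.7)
I would prove Lemma 2.1 by contradiction combined with a direction-covering argument on the unit sphere $S^{n-1}$. Suppose $S = \{x_1, \ldots, x_q\}\subset \Rn$ is a set of points in which every three distinct points determine only angles strictly less than $\beta$; the goal is to bound $q$ from above by some $q(n,\beta)$.

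The key observation is that, for each $x_i \in S$, the set of unit directions $U_i = \{(x_j - x_i)/|x_j - x_i| : j \neq i\}\subset S^{n-1}$ has angular diameter strictly less than $\beta$. Otherwise two such directions would subtend an angle $\geq \beta$ at $x_i$, producing the forbidden triple with vertex $x_i$. In particular, since $\beta < \pi$, each $U_i$ is confined to a proper spherical region, bounded away from being the whole sphere in a quantitative way controlled by $\pi - \beta$.

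I would then fix a finite covering of $S^{n-1}$ by spherical caps of sufficiently small angular diameter (much smaller than $\pi - \beta$), obtained from the compactness of $S^{n-1}$; its cardinality $M = M(n,\beta)$ depends only on $n$ and $\beta$. Pigeonholing iteratively: among the $q-1$ directions in $U_1$, some cap contains at least $(q-1)/M$ of them, isolating a subset of points whose directions from $x_1$ all lie in a thin cone. Picking a point $x_2$ in this subset and again pigeonholing its directions to the remaining points, and continuing, one builds a chain of base points whose accumulated direction-constraints eventually become geometrically incompatible with the hypothesis --- forcing three points to lie almost on a line and hence determining an angle at least $\beta$ at the middle vertex, a contradiction.

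The main obstacle will be arranging the iteration so that the direction-constraints from the successive base points combine to force a triple whose angle at some vertex actually reaches $\beta$, rather than merely producing nested thin cones that remain mutually consistent. Coupling the direction information with the Euclidean distances between base points — so that ``nearly parallel'' directions viewed from different base points genuinely imply an almost-collinear triple — is the technical heart of the Erd\H{o}s--F\"uredi argument, and it is where the explicit (and admittedly very large) bound $q(n,\beta)$ arises.
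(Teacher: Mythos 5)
The paper does not prove this lemma: it attributes it to Erd\H{o}s and F\"uredi and refers to \cite{EF} and to \cite[Lemma 2.1]{KS}, so there is no ``paper's own proof'' to compare against. Assessing your proposal on its own terms, the general direction (covering $S^{n-1}$ by small caps, coloring pairs of points by the cap containing the connecting direction, and pigeonholing) is the right one, but you have left a genuine gap at exactly the step you flag as ``the main obstacle,'' and you misdiagnose what is needed to close it.

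You suggest that coupling the direction information with Euclidean distances is the technical heart of the argument. It is not: no distances are needed, and the argument is purely directional. The key geometric observation is the following. Order the points as $x_1,\ldots,x_q$ and color each ordered pair $(i,j)$ with $i<j$ by a cap (of angular radius $(\pi-\beta)/2$, from a finite cover of $S^{n-1}$ of cardinality $M=M(n,\beta)$) containing $(x_j-x_i)/|x_j-x_i|$. If $i<j<k$ are such that $(i,j)$ and $(j,k)$ receive the \emph{same} cap $C$ with center $v$, then $x_j-x_i$ is within angle $(\pi-\beta)/2$ of $v$, hence $x_i-x_j$ is within angle $(\pi-\beta)/2$ of $-v$, and $x_k-x_j$ is within angle $(\pi-\beta)/2$ of $v$; therefore the angle $\angle x_i x_j x_k$ is at least $\pi-2\cdot(\pi-\beta)/2=\beta$. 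So what one must produce is a \emph{chain of length two with a repeated color}, i.e.\ $i<j<k$ with $(i,j)$ and $(j,k)$ in the same cap, and this is a Ramsey-type statement, not the nested-thin-cone iteration you describe. Your iterated pigeonhole produces a chain of base points whose successive caps may all be different, and you have no mechanism forcing a repetition at consecutive steps; that is precisely the ``nested thin cones that remain mutually consistent'' scenario you worry about, and your sketch does not rule it out.

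The combinatorial closure is short: if no such $i<j<k$ exists, then for every interior index $j$ the set $B_j$ of colors on edges $(i,j)$ with $i<j$ and the set $F_j$ of colors on edges $(j,k)$ with $j<k$ are disjoint nonempty subsets of $\{1,\ldots,M\}$. Two interior indices $j_1<j_2$ with the same type $(B,F)$ are impossible, since the color of $(j_1,j_2)$ would lie in $F_{j_1}\cap B_{j_2}=F\cap B=\emptyset$. Hence the number of interior indices is at most the number of ordered pairs of disjoint nonempty subsets of $\{1,\ldots,M\}$, which is finite and depends only on $M$, so $q(n,\beta)$ may be taken to be this quantity plus three. Your observation that each $U_i$ has angular diameter less than $\beta$ is correct but is not the observation that drives the proof; the useful constraint involves \emph{consecutive} directed edges in an ordering of the points, not the star of directions from a single point.
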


For $0 < \eta \le 1$ we define $t(\eta) = (\eta^2 +
4)^{1/2}/\eta$ and $\gamma(\eta) = 1/t(\eta)$.
Notice that $t(\eta) \ge 2$ and $\eta/5^{1/2} \le \gamma(\eta) \le
\eta/2$. An easy calculation yields the following, see \cite[Lemma 2.3]{KS}.

\begin{lemma} \label{thm:etamato}
  Suppose $y \in \R^n$, $\theta \in S^{n-1}$, $0<\eta\le 1$,
  $t \ge t(\eta)$, and $\gamma=\gamma(\eta)$. If $z \in \R^n \setminus
  \bigl( B(y,tr) \cup H(y,\theta,\gamma) \bigr)$, then
  $B(z,r) \cap H(y,\theta,\eta) = \emptyset$.
\end{lemma}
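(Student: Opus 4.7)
The plan is to compare angular distances to $\theta$. For $w\in B(z,r)$, let $\psi=\angle(z-y,\theta)$ and $\delta=\angle(w-y,z-y)$. The assumption $|z-y|>tr$ together with $t\geq 2$ gives $|w-y|\geq(t-1)r>0$, so both angles are well defined, and the desired conclusion $w\notin H(y,\theta,\eta)$ is equivalent to $\angle(w-y,\theta)\geq\arccos\eta$.

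First I would extract the two natural bounds on these angles. The hypothesis $z\notin H(y,\theta,\gamma)$ is exactly $(z-y)\cdot\theta\leq\gamma|z-y|$, giving $\psi\geq\arccos\gamma$. For $\delta$, a standard law-of-sines argument applied to the triangle with vertices $y$, $z$, $w$ (the extremal configuration being when the ray $yw$ is tangent to $\partial B(z,r)$) yields $\sin\delta\leq r/|z-y|\leq 1/t=\gamma$, whence $\delta\leq\arcsin\gamma$. Because $\gamma\leq\eta/2\leq 1/2$, the angles $\arccos\gamma\in[\pi/3,\pi/2]$ and $\arcsin\gamma\in[0,\pi/6]$ both lie in the range where cosine is strictly monotone.

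Next I would invoke the triangle inequality for the spherical metric to obtain
\begin{equation*}
\angle(w-y,\theta)\;\geq\;\psi-\delta\;\geq\;\arccos\gamma-\arcsin\gamma,
\end{equation*}
so the goal reduces to the scalar inequality $\arccos\gamma-\arcsin\gamma\geq\arccos\eta$; both sides lie in $[0,\pi/2]$, and taking cosines (order-reversing there) turns it into $\cos(\arccos\gamma-\arcsin\gamma)\leq\eta$. The cosine-difference formula collapses the left-hand side to $2\gamma\sqrt{1-\gamma^{2}}$; substituting $\gamma=\eta/\sqrt{\eta^{2}+4}$ and $\sqrt{1-\gamma^{2}}=2/\sqrt{\eta^{2}+4}$ turns the inequality into $4\eta/(\eta^{2}+4)\leq\eta$, which is trivially true for every $\eta\geq 0$ and in fact explains the specific choice of $t(\eta)$.

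I do not anticipate a serious obstacle: once the geometry is packaged through the spherical triangle inequality, the argument collapses to a single explicit trigonometric identity. The two items requiring care are (i) verifying that all the intermediate angles stay in the regime where cosine is monotone, which uses only $\gamma\leq 1/2$, and (ii) the elementary tangent-line estimate $\sin\delta\leq r/|z-y|$, a routine computation in the plane spanned by $z-y$ and $w-y$.
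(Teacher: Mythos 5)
Your proof is correct. Since the paper does not supply its own argument for this lemma---it simply cites Lemma~2.3 of the authors' earlier paper with the remark ``an easy calculation yields the following''---there is no in-text proof to compare against line by line. That said, the argument you give is a clean and complete one, and the route via angular distances is natural: you translate the half-space condition into a lower bound $\psi\geq\arccos\gamma$ on the angle at $y$ between $z-y$ and $\theta$, bound the aperture $\delta$ of the solid cone at $y$ subtended by $B(z,r)$ via the tangent-line estimate $\sin\delta\leq r/|z-y|\leq\gamma$, apply the metric triangle inequality on $S^{n-1}$ to get $\angle(w-y,\theta)\geq\psi-\delta\geq\arccos\gamma-\arcsin\gamma$, and then reduce everything to the scalar inequality $2\gamma\sqrt{1-\gamma^2}=4\eta/(\eta^2+4)\leq\eta$, which is exactly what the definitions $t(\eta)=(\eta^2+4)^{1/2}/\eta$ and $\gamma=1/t(\eta)$ were cooked up to make true. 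The two points you flag for care---monotonicity of cosine on the relevant range (justified by $\gamma\leq 1/2$, so $\arccos\gamma-\arcsin\gamma\in[\pi/6,\pi/2]$) and the tangent-line bound for $\delta$---are exactly the right ones, and both hold. An alternative, more ``coordinate-level'' proof that one might expect under the heading ``easy calculation'' would try to estimate $(w-y)\cdot\theta$ and $|w-y|$ directly via $(w-y)\cdot\theta\leq\gamma|z-y|+r$ and $|w-y|\geq|z-y|-r$; it is worth noting that this cruder route does \emph{not} close with the stated $t(\eta)$ (it would require the false inequality $\sqrt{\eta^2+4}\geq 2+\eta$), so the angular bookkeeping you use is genuinely sharper and appears to be the intended mechanism behind the specific choice of $t(\eta)$ and $\gamma(\eta)$.
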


Below, we include one more simple lemma.

\begin{lemma}\label{hlemma}
Let $m\geq 0$ be an integer and $h\colon (0,r_0)\to(0,\infty)$. Then
the following conditions are equivalent:
\begin{enumerate}
\item\label{1} There is $r_0>0$ such that
\begin{equation}\label{eq:weakh}
  \frac{h(\eps r)}{\eps^mh(r)}\overset{\eps\downarrow
  0}{\longrightarrow} 0
\end{equation}
uniformly for all $0<r<r_0$.
\item\label{2} There is $s>m$ and $r_0,\varepsilon_0>0$ such that 
  \begin{equation}\label{eq:h}
    h(\varepsilon r)\leq\varepsilon^s h(r)
  \end{equation} 
for all $0<r<r_0$ and $0<\eps<\eps_0$.
\item\label{3} There is $0<c<1$ such that
  \begin{equation*}
    \limsup_{r \downarrow 0} \frac{h(cr)}{h(r)} < c^m.
  \end{equation*}
\end{enumerate}
\end{lemma}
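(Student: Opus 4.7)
The plan is to prove the three conditions are equivalent by going around the cycle $(\ref{2})\Rightarrow(\ref{1})\Rightarrow(\ref{3})\Rightarrow(\ref{2})$; only the last implication will require real work.

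The implication $(\ref{2})\Rightarrow(\ref{1})$ is immediate: under $(\ref{2})$ we have $h(\eps r)/(\eps^m h(r))\le \eps^{s-m}$, which tends to $0$ uniformly in $r<r_0$ because $s-m>0$. For $(\ref{1})\Rightarrow(\ref{3})$ I would use the uniform convergence to pick a single $c\in(0,1)$ so small that $h(cr)/(c^m h(r))\le \tfrac12$ for every $r<r_0$; then $\limsup_{r\downarrow 0}h(cr)/h(r)\le \tfrac12 c^m<c^m$, as required.

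The heart of the proof is $(\ref{3})\Rightarrow(\ref{2})$. From $(\ref{3})$ I would first extract $\delta\in(0,1)$ and $r_1>0$ with $h(cr)\le\delta c^m h(r)$ for every $r<r_1$. A straightforward induction on $k$ gives $h(c^k r)\le (\delta c^m)^k h(r)=c^{kS}h(r)$ for all $k\ge 0$ and $r<r_1$, where $S:=m+\log(1/\delta)/\log(1/c)$ is strictly larger than $m$. Now I would fix any intermediate exponent $s\in(m,S)$ and, for a given small $\eps$, let $k$ be the unique integer with $c^{k+1}<\eps\le c^k$. The monotonicity of $h$ assumed in the paper, combined with the iterated estimate, gives $h(\eps r)\le h(c^k r)\le c^{kS}h(r)$, while $\eps>c^{k+1}$ yields $\eps^s>c^{(k+1)s}$. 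Hence $h(\eps r)\le \eps^s h(r)$ follows as soon as $c^{kS}\le c^{(k+1)s}$, equivalently $k\ge s/(S-s)$, which is the case for every sufficiently small $\eps$. Setting $\eps_0:=c^{\lceil s/(S-s)\rceil}$ and $r_0:=r_1$ then yields $(\ref{2})$.

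The only genuine obstacle is the discretization loss in $(\ref{3})\Rightarrow(\ref{2})$: the iterated bound controls $h$ only along the geometric sequence $\{c^k r\}$, so one cannot use the natural exponent $S$ directly but must drop it slightly to an arbitrary $s\in(m,S)$ in order to absorb the $k/(k+1)$ factor coming from rounding a general $\eps$ up to the nearest power of $c$. Everything else is routine manipulation of logarithms and the monotonicity of $h$.
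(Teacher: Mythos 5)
Your proof is correct, and the key step---the passage from a single-scale ratio bound to the exponent-$s$ inequality via iteration along a geometric sequence and rounding a general $\eps$ to the nearest power---is essentially the same discretization argument as in the paper's proof of $(\ref{1})\Rightarrow(\ref{2})$. You merely run the cycle in the reverse direction $(\ref{2})\Rightarrow(\ref{1})\Rightarrow(\ref{3})\Rightarrow(\ref{2})$ and phrase the exponent bookkeeping in terms of $S=m+\log(1/\delta)/\log(1/c)$ rather than the paper's auxiliary $s_0$ with $\delta^{s_0}=c$, but these are the same computation.
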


\begin{proof}
By \eqref{1}, there is $0<\delta<1$ and $0<c<1$ such that
$h(\delta r)<c \delta^m h (r)$ for all $0<r<r_0$. Let
$s_0>0$ be such that $\delta^{s_0}=c$ and take
$m<s<m+s_0$ and $0<\varepsilon_0<\delta$ for which
$\varepsilon^{m+s_0}\leq
\delta^{m+s_0}\varepsilon^s$ for all $0<\varepsilon<\varepsilon_0$. Given
$0<\varepsilon<\varepsilon_0$, let $k\in\N$ be such 
that $\delta^{k+1}<\varepsilon\leq\delta^k$. Then
\begin{align*}
h(\varepsilon r)&\leq h(\delta^k r)\leq c^k\delta^{km}
h(r)=\delta^{k(m+s_0)}h(r)
=\varepsilon^{m+s_0}\big(\delta^k/\varepsilon\big)^{m+s_0}h(r)
\\
&\leq\big(\delta^{k+1}/\varepsilon\big)^{m+s_0}\varepsilon^s
h(r)<\varepsilon^s h(r) 
\end{align*}
for all $0<r<r_0$ giving \eqref{2}. That \eqref{3} implies \eqref{1}
follows by a similar reasoning. Finally, notice that \eqref{2}
clearly implies \eqref{3}.
\end{proof}

Next we prove our main result concerning the distribution of measures
with finite lower density.

\begin{theorem}\label{thm:main}
  Let $\alpha,\eta\in (0,1)$ and suppose
  $h\colon(0,r_0)\rightarrow(0,\infty)$ satisfies \eqref{eq:weakh} for
  some $m \in \{0,\ldots,n-1\}$. If $\mu$ is a Borel measure on $\Rn$
  with $\ld{\mu}{x}<\infty$ for $\mu$-almost all $x\in\Rn$ then
  \begin{equation}\label{claim}
    \limsup_{r \downarrow 0} \inf_{\yli{\theta \in S^{n-1}}{V \in
    G(n,n-m)}} \frac{\mu\bigl( X(x,r,V,\alpha)
    \setminus H(x,\theta,\eta)
    \bigr)}{h(2r)} \ge c \ud{\mu}{x}
  \end{equation}
  for $\mu$-almost all $x \in \R^n$. Here $c>0$ is a constant
  depending only on $n,m,\varepsilon_0, s,\alpha$ and $\eta$ where
  $\varepsilon_0>0$ and $s>m$ are as in Lemma \ref{hlemma}.
\end{theorem}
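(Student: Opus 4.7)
The natural strategy is an argument by contradiction paralleling the proof of \eqref{thm:ks}, with the finite lower density hypothesis replacing the finite upper density assumption used there. Assume the claim fails on a set of positive $\mu$-measure. After standard reductions---restricting to level sets of $\ld{\mu}{\cdot}$ and $\ud{\mu}{\cdot}$ and applying Egorov to the quantifiers in \eqref{claim}---one obtains a compact $A\subset\Rn$ with $\mu(A)>0$ and constants $M,D,\delta,r_1>0$ such that $\ld{\mu}{x}\le M$ and $\ud{\mu}{x}\ge D$ for every $x\in A$, and such that for every $x\in A$ and every $0<r<r_1$ there exist $\theta(x,r)\in S^{n-1}$ and $V(x,r)\in G(n,n-m)$ with
\begin{equation*}
  \mu\bigl(X(x,r,V(x,r),\alpha)\setminus H(x,\theta(x,r),\eta)\bigr)<\delta h(2r),
\end{equation*}
where $\delta\ll D$ will be fixed later.

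Next, I would fix a density point $x_0\in A$ and a scale $0<r_0<r_1$ with $\mu(B(x_0,r_0))\ge(D/2)h(2r_0)$ and $\mu(A\cap B(x_0,r_0))\ge(1-\tau)\mu(B(x_0,r_0))$ for a small parameter $\tau>0$. Setting $\theta_0=\theta(x_0,r_0)$ and $V_0=V(x_0,r_0)$, the failure assumption forces almost all of $\mu|_A$ inside $B(x_0,r_0)$ to lie in the ``thin'' region
\begin{equation*}
  E_0=\bigl(B(x_0,r_0)\setminus X(x_0,r_0,V_0,\alpha)\bigr)\cup\bigl(B(x_0,r_0)\cap H(x_0,\theta_0,\eta)\bigr).
\end{equation*}
Using $\ld{\mu}{y}\le M$ at each $y\in A\cap E_0$ together with a Vitali argument, I extract a countable disjoint family $\{B(y_i,s_i)\}_i$ with $y_i\in A\cap E_0$, radii $s_i\le\varepsilon r_0$ for a parameter $\varepsilon$ to be chosen, and $\mu(B(y_i,s_i))\le 2Mh(2s_i)$, whose union exhausts $A\cap E_0$ up to a $\mu$-null set.

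The heart of the argument is a geometric packing bound of the form $\sum_i h(2s_i)\le C\varepsilon^{s-m}h(2r_0)$, proved by combining Lemmas~\ref{lemma:EF} and \ref{thm:etamato} with the gauge estimate \eqref{eq:h}. At each generation of a nested construction, Lemma~\ref{thm:etamato} transfers the parent half-space condition to the child scales, while Lemma~\ref{lemma:EF} prohibits more than a bounded number of child centers from simultaneously satisfying a common half-space exclusion---any $q+1$ such centers would contain three realizing an angle close to $\pi$, contradicting the transferred half-space constraint via Lemma~\ref{thm:etamato}. This combinatorial cap reduces the ``effective dimension'' of $E_0$ at scale $\varepsilon r_0$ from $n$ down to $m$, so the gauge inequality $h(\varepsilon r_0)\le\varepsilon^sh(2r_0)$ with $s>m$ supplies the surplus factor $\varepsilon^{s-m}$. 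Combining this with $(1-\tau)\mu(B(x_0,r_0))-\delta h(2r_0)\le\sum_i\mu(B(y_i,s_i))\le 2M\sum_ih(2s_i)$ and with $\mu(B(x_0,r_0))\ge(D/2)h(2r_0)$ yields $(1-\tau)D/2\le 2MC\varepsilon^{s-m}+\delta$, a contradiction once $\varepsilon,\delta,\tau$ are chosen small. The principal obstacle is precisely this effective-dimension bound: since $E_0$ contains the full almost half-ball $B(x_0,r_0)\cap H(x_0,\theta_0,\eta)$, it is not thin in any naive metric sense, and the reduction from $\varepsilon^{-n}$ to $\varepsilon^{-m}$ has to be extracted by iterating the failure assumption through enough generations so that the Erd\H{o}s--F\"uredi combinatorics of Lemma~\ref{lemma:EF} caps the cluster sizes at every scale, while the parameters $\varepsilon,\delta,\tau,M,D$ are balanced so that the accumulated errors remain below the decisive gain coming from \eqref{eq:h}.
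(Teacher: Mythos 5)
Your outline has a genuine gap at exactly the point you flag as ``the heart of the argument.'' You propose to cover $A\cap E_0$ by a disjoint Vitali family $\{B(y_i,s_i)\}$ with $s_i\le\varepsilon r_0$ and $\mu(B(y_i,s_i))\le 2Mh(2s_i)$, and then to establish $\sum_i h(2s_i)\le C\varepsilon^{s-m}h(2r_0)$. But $E_0$ contains the entire almost half-ball $B(x_0,r_0)\cap H(x_0,\theta_0,\eta)$, which is an $n$-dimensional region of diameter comparable to $r_0$: the naive volume bound on a disjoint family of $\varepsilon r_0$-balls inside it, combined with \eqref{eq:h}, only gives $\sum_i h(2s_i)\lesssim\varepsilon^{s-n}h(2r_0)$, which is \emph{large} when $s<n$, not small. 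Your claimed exponent $s-m$ requires a reduction of the effective dimension from $n$ to $m$, and no mechanism for that reduction is supplied. The invocation of Lemmas~\ref{lemma:EF} and \ref{thm:etamato} does not produce it: those lemmas cap the number of \emph{well-separated} centers simultaneously dodging a fixed half-space, but they say nothing about the total count of small Vitali balls inside the half-ball $B(x_0,r_0)\cap H(x_0,\theta_0,\eta)$. You acknowledge the difficulty and gesture at ``iterating the failure assumption through enough generations,'' but the overall structure you set up remains a single-scale covering estimate, and it is precisely that structure which cannot deliver the needed bound.

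The paper's actual argument is structurally quite different and avoids this problem entirely. Rather than bounding the measure of a thin region from above, it runs a downward iteration: inside $B_k$ it locates a slab $V_y^i(t^k\lambda^{k+1})$ of width $\sim\lambda$ relative to $B_k$ carrying $\mu$-measure $\gtrsim\lambda^m\mu(F\cap B_k)$ (this is where the covering number $c_1\lambda^{-m}$ of $V^{\perp}\cap B_k$ enters, producing the exponent $m$); then, greedily selecting $q$ balls of the next scale inside the slab and applying Erd\H{o}s--F\"uredi together with Lemma~\ref{thm:etamato}, it forces one of the first $q-1$ balls $B_{k+1}$ to satisfy $\mu(F\cap B_{k+1})\ge M(d\lambda^m)^{k+1}$. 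Since $h(2(t\lambda)^k)\lesssim(t\lambda)^{sk}h(2)$ decays strictly faster in $k$ than $(d\lambda^m)^k$ by the choice of $\lambda$ and the condition $s>m$, the nested balls converge to a point $z\in F$ with $\ld{\mu}{z}=\infty$, contradicting the hypothesis directly. The contradiction is reached at the limit point, not at a single scale, and the finite lower density hypothesis is used only at the very end, not to bound ball measures in a Vitali cover. Your proposal would need to be restructured along these lines to close the gap.
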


\begin{proof}
  Let us first sketch the main idea of the proof: Suppose our theorem is
  false. Then there is a closed  exceptional set $F\subset\Rn$ with positive
  $\mu$-measure so that for all small scales $r>0$ and for all points
  $x$ of $F$, 
  there are $\theta$ and $V$ so that $\mu\bigl(X(x,r,V,\alpha)\setminus
  H(x,\theta,\eta)\bigr)$ is very small compared to $h(2r)$. A simple covering
  argument on $G(n,n-m)$ implies that at each small ball $B=B(z,r)$ centered
  in $F$, we may fix
  $V\in G(n,n-m)$ so that the measure $\mu\bigl(X(x,r,V,\alpha)\setminus
  H(x,\theta,\eta)\bigr)$ is small for some $\theta$ for a set of points
  $x\in F\cap B$ whose measure is comparable to $h(2r)$. This implies
  that for $\lambda>0$, we may find $y\in F\cap B$ so that the
  measure in $V_y(\lambda r)$ is 
  comparable to $\lambda^m h(2r)$. But our assumption implies that if
  $\lambda$ is small, then this
  measure is essentially contained in at most $q-1$ balls of radius
  $\lambda r$, the number $q$ being determined by Lemma
  \ref{lemma:EF}. Thus, there is a ball $B(w,\lambda r)\subset B$ so
  that $\mu\bigl(F\cap B(w,\lambda r)\bigr)\approx \lambda^m h(2r)$. Iterating
  this, we find a sequence of balls $B_1\supset B_2\supset\cdots$ so
  that $\diam (B_k)\approx\lambda^{k}$ and $\mu(F\cap B_k)\approx
  \lambda^{mk}$. By \eqref{eq:weakh}, this implies $\ld{\mu}{x}=\infty$
  for the point $x$ given by $\{x\}=\bigcap_{k}B_k$. This
  gives a contradiction since we may 
  choose $F$ at the outset so that the lower density $\ld{\mu}{x}$
  is finite for all points of $F$.  

  We shall now verify in detail the steps described heuristically
  above. We assume that $m\geq 1$. 
  The case $m=0$ is easier and is discussed at the end of the proof. 
  We may assume that
  $\mu$ is finite since $\mu$-almost all of $\Rn$ is contained in a
  countable union of open balls, each of finite $\mu$-measure. This
  follows by a straightforward covering argument since
  $\ld{\mu}{x}<\infty$ almost everywhere. Let
  $\varepsilon_0>0$ and $s>m$ be as in Lemma \ref{hlemma}.
  We shall prove that for any finite collection, 
  $\{V^1,\ldots,V^l\}\subset G(n,n-m)$,
  \begin{equation*}
    \limsup_{r \downarrow 0} \inf_{\yli{\theta \in S^{n-1}}{i \in \{ 1,
    \ldots,l \} }} \frac{\mu\bigl( X(x,r,V^i,\alpha)
    \setminus H(x,\theta,\eta) \bigr)}{h(2r)}
    \ge c(n,m,s,\varepsilon_0,\eta,\alpha,l)\ud{\mu}{x}
  \end{equation*}
  for $\mu$-almost all $x \in \R^n$ from which \eqref{claim} follows by
  the compactness of $G(n,n-m)$, see \cite[proof of Theorem 2.5]{KS}
  for details.

  Set $t = \max\{t(\eta), 1+3/\alpha\}$, $\gamma = \gamma(\eta)$, where
  $t(\eta)$ and $\gamma(\eta)$ are as in Lemma \ref{thm:etamato},
  and take $\beta < \pi$
  so that the opening angle of $H(x,\theta,\gamma)$ is smaller than
  $\beta$. Let $q = q(n,\beta)$ be as in Lemma \ref{lemma:EF}.
  Moreover, define $c_1 =2^{m}m^{m/2}$, $c_2=2^n n^{n/2}$,
  $d=\bigl(3 c_1 l (q-1)\bigr)^{-1}$, $\lambda
  =\min\{2^{-1}t^{s/(m-s)}d^{1/(s-m)},\varepsilon_{0}/(3t)\}$, and
  $c=c(n,m,s,\eta,\alpha,l)=\lambda^{n}/(6 c_1 c_2\ell 
  3^s)$. These definitions together with \eqref{eq:h} guarantee the
  following three facts: If $0<r<r_0$, 
  $k\in\N$, $V\in G(n,n-m)$, $z \in \R^n$,
  and $x,y\in V_z(\lambda r)$
  with $|x-y|\geq t\lambda r$, then
  \begin{align}
    B(y,\lambda r) &\subset X(x,V,\alpha),\label{fact1} \\
    h\bigl(6(t\lambda)^k r\bigr) &< 3^s d^k \lambda^{km}
    h(2r), \label{fact2} \\
    d \lambda^{m-s}t^{-s} &\geq 2^{s-m}.\label{fact3}
  \end{align}
  We give some details for the convenience. The claim \eqref{fact1}
  follows since $d(w-x,V)\leq 3\lambda 
  r\leq \alpha(t-1)\lambda r< \alpha |w-x|$ for all $w\in B(y,\lambda r)$ by
  the definition of $t$. To
  prove \eqref{fact2}, we use \eqref{eq:h} to get
  $h\bigl(6(t\lambda)^k r\bigr)\leq 
  3^s t^{ks} \lambda^{ks} h(2r)$. The definition of $\lambda$ easily
  gives $t^{ks}\lambda^{ks}< d^k\lambda^{km}$. Finally, the bound
  \eqref{fact3} comes directly from the definition of $\lambda$. 
  
  Let $0<M<\infty$ and define
  \begin{equation*}
    A=\{x\in\Rn : \ud{\mu}{x}>M\text{ and }\ld{\mu}{x}<\infty\}.
  \end{equation*}
  The set $A$ is Borel since $x\mapsto \ud{\mu}{x}$ and
  $x\mapsto\ld{\mu}{x}$ are Borel functions.
  It suffices to show that
  \begin{align*}
    \limsup_{r \downarrow 0} \inf_{\yli{\theta \in S^{n-1}}{i \in \{ 1,
    \ldots,l \} }} \frac{\mu\bigl( X(x,r,V^i,\alpha)
    \setminus H(x,\theta,\eta) \bigr)}{h(2r)}
    \geq c M
  \end{align*}
  for almost all $x\in A$.
  Suppose to the contrary that there exists
  a set $F \subset A$ with $\mu(F) > 0$ and $0<r_1<r_0$ such that for every
  $x \in F$ and $0 < r < r_1$, there are $i \in \{ 1,\ldots,l \}$ and
  $\theta \in S^{n-1}$ with
  \begin{equation}\label{eq:at}
    \mu\bigl( X(x,r,V^i,\alpha) \setminus
    H(x,\theta,\eta) \bigr) < c M h(2r).
  \end{equation}
  Going into a subset, if necessary, we may assume that
  $F$ is closed.

  Choose $x\in F$ such that $\lim_{r\downarrow 0}
  \mu\bigl(F \cap B(x,r)\bigr)/\mu\bigl(B(x,r)\bigr)=1$ and
  $0<r<r_1/3$ such that
  $\mu\bigl(F \cap B(x,r)\bigr)\geq M h(2r)$. To simplify the notation,
  we assume that $r=1$ and $h(2)=1$. 
  We can do this by replacing $\mu$ by
  $\tilde{\mu}(A)=\mu(rA)/h(2r)$ and $h$ by
  $\tilde{h}(t)=h(rt)/h(2r)$. Our aim is to find $z\in F$ for which
  $\ld{\mu}{z}=\infty$ and this is clearly equivalent to
  $\underline{D}_{\tilde{h}}(\tilde{\mu},z/r)=\infty$.

  Let $B_0=B(x,1)$. Suppose that
  $B_k=B\bigl(x_k, (t\lambda)^k\bigr)$
  has been defined for $k\geq 0$ so that $\mu(F \cap B_k)\geq M
  d^k\lambda^{mk}$. 
  Take $x_{k+1}\in F \cap B_k$ which maximizes the function
  $y \mapsto \mu\bigl(F \cap B(y,(t\lambda)^{k+1})\bigr)$
  in $F \cap B_k$. There is such
  a point because $F \cap B_k$ is compact and the function $y\mapsto
  \mu\bigl(F \cap B(y, (t\lambda)^{k+1})\bigr)$ is upper 
  semicontinuous on $F \cap B_k$.
  Define $B_{k+1}=B\bigl(x_{k+1}, (t\lambda)^{k+1}\bigr)$.
  Our aim is to estimate the measure $\mu(F \cap B_{k+1})$ from
  below. Define, for $i \in \{ 1,\ldots,l \}$,
  \begin{equation*}
  \begin{split}
    \tilde{C_i} = \bigl\{ x \in F \cap B_k : \mu\bigl(
    X&(x,3(t\lambda)^{k},V^i,\alpha) \setminus H(x,\theta,\eta) \bigr) \\ &<
    c M h\bigl(6(t\lambda)^{k}\bigr) \text{ for some }
    \theta \in S^{n-1} \bigr\}.
  \end{split}
  \end{equation*}
  Fix $i \in \{ 1,\ldots,l \}$ for which
  $\mu(\tilde{C_i})\geq\mu(F \cap B_k)/l\geq M d^k \lambda^{mk}/l$
  and take a compact $C_i\subset\tilde{C_i}$ with
  $\mu(C_i)>\mu(\tilde{C_i})/2$.  We may cover the set 
  ${V^i}^\bot \cap B_k$ with
  $c_1\lambda^{-m}$ balls of radius $t^k\lambda^{k+1}$
  and hence there exists $y \in {V^i}^\bot \cap B_k$ for which
  \begin{equation} \label{eq:p_2}
    \mu\bigl(C_i \cap V_y^i(t^k\lambda^{k+1}) \bigr) \ge
    2^{-1}c_{1}^{-1}\ell^{-1}M d^k \lambda^{m(k+1)}.
  \end{equation}

  Next we shall choose $q$ points as follows: Choose a point
  $y_1 \in C_i \cap V_y^i(t^k\lambda^{k+1})$ such that the ball
  $B(y_1,t^k\lambda^{k+1})$ has largest $\mu|_F$ measure among the balls
  centered at $C_i \cap
  V_y^i(t^k\lambda^{k+1})$ with radius $t^k\lambda^{k+1}$. If
  $y_1,\ldots,y_p$, $p \in \{ 1,\ldots,q-1 \}$, have already been
  chosen, we choose $y_{p+1} \in C_i \cap V_y^i(t^k\lambda^{k+1}) \setminus
  \bigcup_{j=1}^p U\bigl(y_j,(t\lambda)^{k+1}\bigr)$ so that the ball
  $B(y_{p+1},t^k\lambda^{k+1})$ has maximal $\mu|_F$ measure among the
  balls centered at $C_i \cap
  V_y^i(t^k\lambda^{k+1}) \setminus 
  \bigcup_{j=1}^p U\bigl(y_j,(t\lambda)^{k+1}\bigr)$ with radius
  $t^k\lambda^{k+1}$. If our process of selecting the points $y_j$
  terminates before the $q$:th step, i.e.\ the balls
  $\bigcup_{j=1}^{p}U\bigl(y_j,(t\lambda)^{k+1}\bigr)$ cover the set
  $F\cap C_i\cap V^{i}_{y}(t^k\lambda^{k+1})$ for some $p<q$, we get
  \begin{equation}\label{eq:processterminates}
  \begin{split}
    \sum_{j=1}^{p}\mu\bigl(F\cap B(y_j,(t\lambda)^{k+1})\bigr) &\geq
    \mu\big(C_i\cap V_y^i(t^k\lambda^{k+1})\big)\\ &\geq 
    2^{-1}c_{1}^{-1}\ell^{-1}M d^k 
    \lambda^{m(k+1)} 
  \end{split}
  \end{equation}
  by \eqref{eq:p_2}.

  Suppose now 
  that the process did not terminate before the $q$:th step. Since 
  the set $V_y^i(t^k\lambda^{k+1}) \cap B_k$ may be covered
  by $c_2\lambda^{m-n}$ balls of radius $t^k\lambda^{k+1}$, using
  \eqref{eq:p_2}, we get
  \begin{equation} \label{eq:p_3}
  \begin{split}
    \mu\bigl( F \cap B(y_q,t^k\lambda^{k+1}) \bigr) \geq c_{2}^{-1}\lambda^{n-m}
    \biggl(&2^{-1} c_{1}^{-1}\ell^{-1}M d^{k}\lambda^{m(k+1)} \\
    &- \sum_{j=1}^{q-1}\mu\bigl( F \cap B(y_j,(t\lambda)^{k+1})
      \bigr) \biggr).
  \end{split}
  \end{equation}
  According to
  Lemma \ref{lemma:EF}, we may choose three points 
  $w,w_1,w_2$ from the set $\{ y_1,\ldots,y_q \}$ such that for each
  $\theta \in S^{n-1}$ there is $j \in \{ 1,2 \}$ for which $w_j \in
  \R^n \setminus \bigl( B(w,(t\lambda)^{k+1}) \cup H(w,\theta,\gamma)
  \bigr)$. We obtain, using Lemma \ref{thm:etamato}, that for each
  $\theta \in S^{n-1}$ there is $j \in \{ 1,2 \}$ such that
  \begin{equation*}
    B(w_j,t^k\lambda^{k+1}) \subset B\bigl(w,3(t\lambda)^{k}\bigr)
    \setminus H(w,\theta,\eta)
  \end{equation*}
  and hence \eqref{fact1} implies that also
  \begin{equation}\label{inclusion}
    B(w_j,t^k\lambda^{k+1}) \subset X\bigl(w,3(t\lambda)^k,V^i,\alpha\bigr)
    \setminus H(w,\theta,\eta),
  \end{equation}
  see Figure \ref{balls}.
  \begin{figure}
    \psfrag{V}{$V_{y}^{i}(t^k\lambda^{k+1})$}
    \psfrag{X}{$X(w,3(t\lambda)^k,V^i,\alpha)$}
    \psfrag{a}{$w_1$}
    \psfrag{b}{$w_2$}
    \psfrag{w}{$w$}
    \psfrag{g}{$\delta$}
    \psfrag{B}{$B_k$}
    \begin{center}
    \includegraphics[scale=0.8]{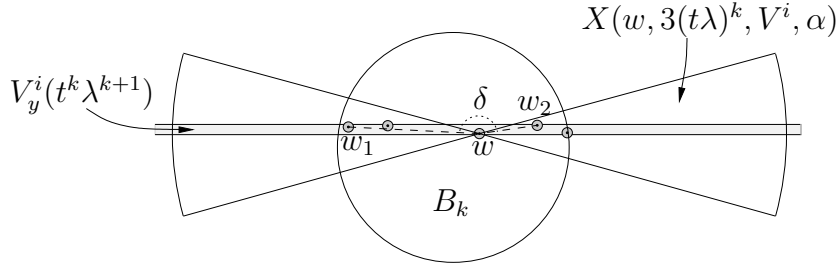}
    \end{center}
    \caption{Illustration for the proof of Theorem \ref{thm:main}. The
    angle $\delta$ formed by the points $w_1,w$, and $w_2$ is greater
    than $\beta$.} 
    \label{balls}
  \end{figure}
  Since $w \in C_i$ there is $\theta\in S^{n-1}$ so that
    $\mu\bigl( X(w,3(t\lambda)^k,V^i,\alpha) \setminus H(w,\theta,\eta) \bigr)
     < c M h\bigl(6(t\lambda)^{k}\bigr)$. Choosing $j\in\{1,2\}$ for
    which \eqref{inclusion} holds, we get
  \begin{align}
    \mu\bigl(F\cap B(y_q,t^k\lambda^{k+1})\bigr)&\leq\mu\bigl(F\cap
    B(w_j,t^k\lambda^{k+1})\bigr)\notag\\
    &\leq \mu\bigl( X(w,3(t\lambda)^k,V^i,\alpha)\label{estimate}
    \setminus H(w,\theta,\eta) \bigr)\\
    & < c M h\bigl(6(t\lambda)^{k}\bigr).\notag
  \end{align}
  Consequently, using 
  \eqref{eq:p_3}, \eqref{estimate}, \eqref{fact2}, and the definitions
  of $c$, $c_1$, $c_2$, and $d$, we get
  \begin{align*}
    \sum_{j=1}^{q-1} \mu\bigl( F \cap B(y_j,(t\lambda)^{k+1}) \bigr) &> 
    2^{-1}c_{1}^{-1}\ell^{-1}M d^{k}\lambda^{m(k+1)}- 
    c_2 c M h\bigl(6(t\lambda)^k\bigr)\lambda^{m-n}\\
    &> 2^{-1} c_{1}^{-1}\ell^{-1}M d^{k}\lambda^{m(k+1)}-c_2 c M
    3^s d^k 
    \lambda^{m(k+1)}\lambda^{-n}\\
    &=  3^{-1}c_{1}^{-1}\ell^{-1}M d^{k}\lambda^{m(k+1)}\\
    &= (q-1)M d^{k+1}\lambda^{m(k+1)}.
  \end{align*}
  It follows that there is $y_j \in \{ y_1,\ldots,y_{q-1} \}$ for
  which $\mu\bigl( F \cap B(y_j,(t\lambda)^{k+1}) \bigr) \geq
  M (d\lambda^m)^{k+1}$. Inspecting the above calculation, we see that
  this is true also if
  \eqref{eq:processterminates} holds. Thus we get
  \begin{equation}\label{eq:muk}
    \mu(F \cap B_{k+1})\geq M (d\lambda^m)^{k+1} 
  \end{equation}
  and this remains true for all $k\in\mathbb{N}$. 

  Let $z=\lim_{k\rightarrow\infty} x_k$. Since $t\lambda \le 1/3$, we have
  $|z-x_k|\leq\sum_{i=k}^{\infty}(t\lambda)^i<2(t\lambda)^k$. Thus
  $B_k\subset B\bigl(z,3(t\lambda)^k\bigl)$ for all $k\in\N$.
  If $(t\lambda)^{k+1}\leq
  r'<(t\lambda)^{k}$, then $3r'<(t\lambda)^{k-1}$, and hence,
  using  \eqref{eq:muk}, \eqref{eq:h}, and \eqref{fact3}, we get 
  \begin{align*}
    \frac{\mu\bigl(B(z,3r')\bigr)}{h(6r')}
    &\geq \frac{\mu(B_{k+1})} {
    h\bigl(2(t\lambda)^{k-1}\bigr)} 
    > \frac{M d^{k+1}\lambda^{m(k+1)}}{h\bigl(2(t\lambda)^{k-1}\bigr)} \\
    &= M d^2 \lambda^{2m}\bigl(d \lambda^{m-s}t^{-s}\bigr)^{k-1}
    \frac{(t\lambda)^{s(k-1)}}{h\bigl(2(t\lambda)^{k-1}\bigr)} \\
    &\geq \frac{M d^2 \lambda^{2m} 2^{(s-m)(k-1)}}{h(2)}\longrightarrow\infty 
  \end{align*}
  as $r'\downarrow 0$. This implies $\ld{\mu}{z}=\infty$,
  giving a contradiction since $z\in F$. 
  This completes the proof in the case $m\geq 1$.

  When $m=0$, the proof is actually easier since we do not need to consider
  the slices $V_{i}^{y}$. We argue by contradiction that there is a compact set
  $F$ with $\mu(F)>0$ so that $\ld{\mu}{x}<M$ and
  \eqref{eq:at} is satisfied for all $x\in F$ (the cones $X(x,r,V^i,\alpha)$ 
  are replaced by $B(x,r)$, $l=1$, and the infimum is only over all $\theta\in
  S^{n-1}$). Then we define $B_0$ such that $\mu(F\cap B_0)\geq M
  h\bigl(\diam(B_0)\bigr)$ and for $k\geq 0$ we choose the balls
  $B\bigl(y_j,(t\lambda)^{k+1}\diam(B_0)/2\bigr)$ for $y_1,\ldots,
  y_q\in F\cap B_k$ as 
  above. Finally, we use Lemma \ref{lemma:EF} to get a lower bound for
  $\mu(F\cap B_{k+1})$ yielding a point $z\in F$ for which
  $\ld{\mu}{z}=\infty$. 
\end{proof}

Let us now consider the most important special cases of Theorem
\ref{thm:main}. Let $h_s(r) =
r^s$ as $r \ge 0$. As noted in the
introduction, Theorem \ref{thm:main} is a generalization of
\eqref{thm:ks}. This follows from the well known fact that 
\[2^{-s} \le \uds{\HH^s|_A}{x} \le 1\]
for $\HH^s$-almost all $x\in A$ provided that $A\subset\Rn$ with
$0<\HH^s(A)<\infty$. 
The most important improvement in Theorem \ref{thm:main} compared to
\eqref{thm:ks} is related to the $s$-dimensional packing measure, $\PP^s$. See
\cite[\S 5.10]{ma} for the definition. If $A\subset\Rn$ with
$0<\PP^s(A)<\infty$ then
\[\lds{\PP^s|_A}{x}=1\]
for $\PP^s$-almost all $x\in A$, see \cite[Theorem 6.10]{ma}. Thus we
get the following corollary: 

\begin{corollary} \label{cor:packing}
  Suppose $0 \le m < s \le n$ and $0 < \alpha,\eta \le 1$. Then there
  is a constant $c=c(n,m,s,\alpha,\eta)>0$ such that
  \begin{align}
    \limsup_{r \downarrow 0} \inf_{\yli{\theta \in S^{n-1}}{V \in
    G(n,n-m)}} &\frac{\PP^s\bigl( A \cap X(x,r,V,\alpha)
    \setminus H(x,\theta,\eta) \bigr)}{(2r)^s}\label{ud}\\
    &\geq c\,\uds{\PP^s|_A}{x} \geq c \notag
  \end{align}
  for $\PP^s$-almost every $x \in A$ whenever $A\subset\R^n$ with
  $0<\PP^s(A)<\infty$.
\end{corollary}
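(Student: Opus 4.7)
The plan is to deduce Corollary \ref{cor:packing} as an essentially immediate application of Theorem \ref{thm:main} to the measure $\mu=\PP^s|_A$ with gauge $h=h_s$ given by $h_s(r)=r^s$. The main task is simply to verify the two hypotheses of the theorem (a gauge condition and a finite lower density condition) and to read off the stated lower bound from the known behaviour of $\PP^s$ on sets of finite packing measure.

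First, I would check that $h_s$ satisfies \eqref{eq:weakh} for the given $m$. Since $s>m$, one has
\begin{equation*}
\frac{h_s(\eps r)}{\eps^m h_s(r)}=\eps^{s-m}\longrightarrow 0
\end{equation*}
as $\eps\downarrow 0$, uniformly in $r$, so the required gauge condition holds. Moreover, Lemma \ref{hlemma}(\ref{2}) holds with the very same $s$ and with any $\eps_0\le 1$, so the constants $\eps_0$ and $s$ available in Theorem \ref{thm:main} may be chosen to depend only on $s$; consequently the resulting constant $c$ depends only on $n,m,s,\alpha,\eta$, which matches the statement.

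Next, I would verify the hypothesis on the lower density. Taking $\mu=\PP^s|_A$, the cited fact $\lds{\PP^s|_A}{x}=1$ for $\PP^s$-almost every $x\in A$ gives $\ld{\mu}{x}<\infty$ for $\mu$-almost every $x\in\Rn$ (since $\mu$ is supported on $A$). Theorem \ref{thm:main} then applies and yields, for $\mu$-almost every $x$,
\begin{equation*}
\limsup_{r\downarrow 0}\inf_{\yli{\theta\in S^{n-1}}{V\in G(n,n-m)}}\frac{\mu\bigl(X(x,r,V,\alpha)\setminus H(x,\theta,\eta)\bigr)}{(2r)^s}\ge c\,\ud{\mu}{x}.
\end{equation*}
Since $\mu\bigl(X(x,r,V,\alpha)\setminus H(x,\theta,\eta)\bigr)=\PP^s\bigl(A\cap X(x,r,V,\alpha)\setminus H(x,\theta,\eta)\bigr)$ and $\ud{\mu}{x}=\uds{\PP^s|_A}{x}$, this is precisely the first inequality of \eqref{ud}.

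Finally, for the second inequality I would use that $\uds{\PP^s|_A}{x}\ge\lds{\PP^s|_A}{x}=1$ for $\PP^s$-almost every $x\in A$, which makes $c\,\uds{\PP^s|_A}{x}\ge c$ on a set of full $\PP^s|_A$-measure. There is no real obstacle here: all the work has already been done inside Theorem \ref{thm:main}. The only points requiring care are ensuring that the gauge constants in Lemma \ref{hlemma} can be absorbed into a dependence on $s$ alone (so the corollary's constant depends only on $n,m,s,\alpha,\eta$) and quoting the right density theorem for $\PP^s$; both are routine.
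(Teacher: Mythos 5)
Your argument is correct and is exactly the (unwritten) proof intended by the paper: apply Theorem \ref{thm:main} with $\mu=\PP^s|_A$ and gauge $h_s(r)=r^s$, using the cited fact that $\lds{\PP^s|_A}{x}=1$ for $\PP^s$-a.e.\ $x\in A$ to obtain both the finite lower density hypothesis and the bound $\uds{\PP^s|_A}{x}\ge 1$. Your observation that for $h=h_s$ one may take $\eps_0=1$ and the same exponent $s$ in Lemma \ref{hlemma}, so that the constant depends only on $n,m,s,\alpha,\eta$, is precisely the point that makes the corollary's constant well-defined.
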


It is remarkable to note that the upper
density $\uds{\PP^s|_A}{x}$ may be infinity
almost everywhere on the set $A$. In this case Corollary
\ref{cor:packing} states that also the upper density \eqref{ud} is
infinity for $\PP^s$-almost every $x\in A$.

For many fractals some other gauge function than $h_s$ might be more useful
in measuring the fractal set in a delicate manner.
  Denote the
  Hausdorff and packing measures constructed using the gauge $h$ by
  $\HH_h$ and $\PP_h$, respectively. See 
  \cite[\S 4.9]{ma} and \cite[Definition 3.2]{Cu} for the definitions.
  If
  $A,B\subset\Rn$, $0<\HH_h(A)<\infty$, $0<\PP_h(B)<\infty$,
  $\mu=\HH_h|_A$, and $\nu=\PP_h|_B$, then
  $\liminf_{r\downarrow0}h(r)/h(2r)\leq\ud{\mu}{x}\leq 1$ for $\mu$-almost
  every $x\in\Rn$ and
  $\ld{\nu}{x}=1$ for $\nu$-almost
  every $x\in\Rn$. Thus Theorem \ref{thm:main} may be applied to
  measures $\mu$ and $\nu$ provided that $h$ satisfies any of the
  conditions \eqref{1}--\eqref{3} of Lemma \ref{hlemma}.
  These conditions hold for functions such as
  $h(r)=r^s/\log(1/r)$ or 
  $h(r)=r^s\log(1/r)$, $s>m$. However, some gauge functions such as
  $h(r)=r^m/\log(1/r)$ fail to satisfy them although
  $\lim_{r\downarrow 0}h(r)/r^m=0$. For this
  gauge, Theorem \ref{thm:main} is not even true as will be shown in
  Proposition \ref{pro:por}.

\section{Porosity and conical densities}\label{sec:poro}

In this section we discuss relations between conical upper density
theorems and porosity of measures.
Our application concerns the following definition of lower porosity of
measures. 
Let $k$ and $n$ be integers with $1\le k\le n$. For all locally finite
Borel measures $\mu$ in $\mathbb R^n$, $x\in\mathbb R^n$, $r>0$, and
$\varepsilon>0$,
we set
\begin{align*}
  \por_k(\mu,x,r,\varepsilon)=\sup\{\varrho : \;&\text{there are distinct }
    z_1,\ldots,z_k\in\mathbb R^n\setminus\{x\} \text{ such that }\\
 &B(z_i,\varrho r)\subset B(x,r)\text{ and }\mu\bigl(B(z_i,\varrho r)\bigr)
   \le\varepsilon\mu\bigl(B(x,r)\bigr)\\
 &\text{for every }i\text{ and }(z_i-x)\cdot(z_j-x)=0\text{ if }j\neq i\}. 
\end{align*}
The \emph{$k$-porosity} of $\mu$ at a point $x$ is defined to be
\begin{equation*}
  \por_k(\mu,x)=\lim_{\varepsilon\downarrow 0}\liminf_{r\downarrow 0}
   \por_k(\mu,x,r,\varepsilon),
\end{equation*}
When $k=1$, our definition of $\por_1$ agrees with the lower porosity
of measures introduced by Eckmann, J\"arvenp\"a\"a and J\"arvenp\"a\"a in
\cite{EJJ}. When $k>1$, our definition of $k$-porosity is a natural
generalization of the $k$-porosity of sets studied in \cite{JJKS} and
\cite{KS}. For a
motivation, examples, and more information 
on dimension of lower porous sets and measures, consult \cite{JJ} and
\cite{KS}. It is possible that $\por_k(\mu,x)>1/2$ in a single point
but $\por_k(\mu,x)\leq 1/2$ for almost every $x$ for any Borel measure
$\mu$, see \cite[p.\ 4]{EJJ}.

If $0<\alpha<1$ and $m,n\in\mathbb{N}$, we denote
\begin{align*}
V&=\{x\in\Rn\,:\,x_i=0 \text{ for all }i=1,\ldots, n-m\},\\
C&=\{x\in\Rn\,:\,x_i>0\text{ for }i=1,\ldots,n-m\},
\end{align*}
 and
$\theta=(n-m)^{-1/2}\sum_{i=1}^{n-m}e_i\in S^{n-1}$ and define 
\[\eta(\alpha,m,n)=\sup\{\eta\geq 0\,:\,C\cap X(0,V,\alpha)\subset H(0,\theta,\eta)\},\] 
where
$X(x,V,\alpha)=X(x,V,\infty,\alpha)=\{y\in\Rn\,:\,\dist(y-x,V)<\alpha|y-x|\}$.
Moreover, if $0<\eta<\eta(\alpha,n,m)$, we put
$x_0=\sum_{i=1}^{n-m}e_i\in\Rn$ and
\begin{equation}\label{cmato}
\tilde{c}(\eta)=\tilde{c}(\eta,\alpha,n,m)=\inf\{r>0\,:\,C\cap
X(0,V,\alpha)\setminus B(0,r)\subset H(x_0,\theta,\eta)\}.
\end{equation} 
By simple geometric inspections, one checks that $\eta>0$ and
$\tilde{c}<\infty$ though the exact values may be hard to compute.

\begin{theorem}\label{thm:poro}
Let $h$ satisfy the doubling condition
\begin{equation}\label{doubling}
  \limsup_{r\downarrow 0}h(2r)/h(r)<\infty
\end{equation}
and suppose further that
\begin{equation}\label{hunif}
  h(\varepsilon r)/h(r)\overset{\varepsilon\downarrow
    0}{\longrightarrow} 0
\end{equation}
uniformly for all $0<r<r_0$.
Assume that $0\leq m < n$, $0<\alpha<1$, and
$0<\eta<\eta(\alpha,n,m)$. Let $\mu$ be a Borel measure on
$\mathbb{R}^n$ with $0<\ud{\mu}{x}<\infty$ for
$\mu$-almost all $x \in \R^n$ and suppose there is $c>0$ such that 
\begin{equation}\label{cdensity}
  \limsup_{r \downarrow 0} \inf_{\yli{\theta \in S^{n-1}}{V \in
    G(n,n-m)}} \frac{\mu\bigl( X(x,r,V,\alpha)
    \setminus H(x,\theta,\eta)
    \bigr)}{h(2r)} \ge c \ud{\mu}{x}
\end{equation}
for $\mu$-almost every $x \in \R^n$.
Then $\por_{n-m}(\mu,x)\leq 1/2-c'$ for $\mu$-almost every $x$, where
$c'>0$ is a constant depending only on $n,m,\alpha,\eta,c$, and $h$.
\end{theorem}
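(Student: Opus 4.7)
I would prove the statement by contradiction. Suppose, to the contrary, that on a Borel set $E$ with $\mu(E)>0$ we have $\por_{n-m}(\mu,x)>1/2-c'$ for a small $c'>0$ to be chosen later. By an Egoroff-type argument combined with a Lebesgue density-point argument, I pass to a subset $E' \subset E$ of positive $\mu$-measure on which $\ud{\mu}{x}$ is uniformly bounded above and below by positive constants, the bound $\mu(B(x,r)) \leq 2\ud{\mu}{x} h(2r)$ holds for all small $r$, and for a fixed small $\varepsilon>0$ the porosity inequality $\por_{n-m}(\mu,x,r,\varepsilon) > 1/2-c'$ holds uniformly for all $x \in E'$ and $r<r_1$. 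The hypothesis \eqref{cdensity} simultaneously provides a sequence of small scales where its limsup is nearly attained.

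Fix $x \in E'$ and select a scale $r$ at which both facts are active. Porosity yields orthogonal displacements $z_1-x,\ldots,z_{n-m}-x$ with $\varrho > 1/2-c'$, $B(z_i,\varrho r) \subset B(x,r)$, and $\mu(B(z_i,\varrho r)) \leq \varepsilon \mu(B(x,r))$. After an orthogonal rotation I take $z_i-x$ along coordinate axes and set $V = \mathrm{span}(z_1-x,\ldots,z_{n-m}-x) \in G(n,n-m)$ and $\theta = \sum_i(z_i-x)/\bigl|\sum_i(z_i-x)\bigr|$, the positive-corner direction. The conical density hypothesis at this $V$ and $\theta$ yields
\[
  \mu\bigl(X(x,r,V,\alpha) \setminus H(x,\theta,\eta)\bigr) \geq \tfrac{c}{2}\,\ud{\mu}{x}\,h(2r).
\]

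The geometric heart of the argument is to establish, for $c'$ small enough, an inclusion of the form
\[
  X(x,r,V,\alpha) \setminus H(x,\theta,\eta) \subset \bigcup_{i=1}^{n-m} B(z_i,\varrho r) \,\cup\, B(x,\tilde r\, r),
\]
with $\tilde r = \tilde r(\alpha,\eta,c',n,m) \to 0$ as $c' \to 0$. The choice $\eta < \eta(\alpha,n,m)$ guarantees via the very definition of $\eta(\alpha,n,m)$ that the positive-octant part of the cone $X$ lies inside $H(x,\theta,\eta)$ and so is excluded from $X \setminus H$. The quantity $\tilde c(\eta)$ of \eqref{cmato}, together with the easily verified inclusion $H(x_0,\theta,\eta) \subset H(x,\theta,\eta)$ (valid because $\theta$ points from $x$ toward $x_0 := \sum_i z_i$), handles the far-field portion of the cone. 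The remaining non-positive-octant parts, once $\varrho$ is close enough to $1/2$, are absorbed by the dilated holes except in a small residual neighborhood $B(x,\tilde r r)$ near $x$ whose radius is controlled by $\tilde c$ and $c'$.

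Combining the covering with the available mass bounds,
\[
  \tfrac{c}{2}\,\ud{\mu}{x}\,h(2r) \leq (n-m)\,\varepsilon\,\mu(B(x,r)) + \mu\bigl(B(x,\tilde r r)\bigr) \leq 2(n-m)\varepsilon\,\ud{\mu}{x}\,h(2r) + 2\,\ud{\mu}{x}\,h(2\tilde r r).
\]
By \eqref{hunif}, $h(2\tilde r r)/h(2r) \to 0$ uniformly in $r$ as $\tilde r \to 0$; hence choosing first $c'$ small (so $\tilde r$ is small) and then $\varepsilon$ small forces the right-hand side below the left-hand side, a contradiction. The principal obstacle is the geometric covering in the third paragraph: one must carefully exploit the definitions of $\eta(\alpha,n,m)$ and $\tilde c(\eta)$, together with the requirement that $\varrho$ be close to $1/2$, to show that $n-m$ orthogonal large holes plus a small residual ball already suffice to sweep up the cone $X(x,r,V,\alpha)\setminus H(x,\theta,\eta)$.
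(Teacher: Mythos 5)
Your overall strategy matches the paper's: argue by contradiction, pass to a set of positive measure where the upper density, the density‐point property, the near‐attainment of \eqref{cdensity}, and the porosity bound are all uniform; at a common small scale pick the orthogonal holes $z_1,\ldots,z_{n-m}$, let $V=\mathrm{span}(z_1-x,\ldots,z_{n-m}-x)$, apply \eqref{cdensity} at this $V$ and some $\theta$, show geometrically that $X(x,r,V,\alpha)\setminus H(x,\theta,\eta)$ is covered by the holes plus a ball $B(x,\tilde r\,r)$ of radius a small multiple of $r$, and then contradict the lower bound from \eqref{cdensity} by pushing $\varepsilon$ and $\tilde r$ to zero via \eqref{hunif} and \eqref{doubling}. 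This is exactly the paper's shape of argument.

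However, your choice $\theta=\sum_i(z_i-x)/\bigl|\sum_i(z_i-x)\bigr|$, the \emph{positive}-corner direction pointing toward the holes, is wrong; the paper takes $\theta$ in the \emph{opposite} direction, $\theta=-k^{-1/2}\sum_i\theta_i$ with $\theta_i=(z_i-x)/|z_i-x|$. This is not cosmetic. With $\theta$ negative, $H(x,\theta,\eta)$ removes the part of the cone lying away from all the holes, so $X\setminus H$ is confined to the holes' side plus a narrow equatorial region; removing the holes (equivalently, removing the half-spaces $H(x+\delta r\theta_i,\theta_i)$ via the inclusion from \cite[Lemma 3.1]{KS}) then traps what remains inside $B(x,\tilde c\delta r)$, and the quantity $\tilde c(\eta)$ of \eqref{cmato} is precisely designed to bound this residual. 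With your positive $\theta$, $H(x,\theta,\eta)$ deletes exactly the positive octant region of the cone that \emph{is} covered by the holes, and $X\setminus H$ then contains the entire opposite part of the cone (points with $(y-x)\cdot\theta_i<0$ for some $i$, extending out to distance $\sim r$); the holes $B(z_i,\varrho tr)$ lie strictly on the positive $\theta_i$ side of the hyperplane through $x+ (1-2\varrho)tr\,\theta_i$ and never reach this region, so the covering $X\setminus H\subset\bigcup_i B(z_i,\cdot)\cup B(x,\tilde r\,r)$ is simply false; this breaks the step you call the ``geometric heart'' and hence the contradiction. Your remark that ``the positive-octant part of the cone lies inside $H$ and so is excluded'' is true but self-defeating: it is the positive-octant part that one can afford to keep and cover by the holes.

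A secondary imprecision: the porosity must be exploited at the enlarged scale $tr$ with $t=(1-2\varrho)^{-1/2}$, giving holes $B(z_i,\varrho t r)$ rather than $B(z_i,\varrho r)$, since the half-space inclusion $H(x+\delta r\theta_i,\theta_i)\cap B(x,r)\subset B(z_i,\varrho t r)$ with $\delta\to 0$ as $\varrho\uparrow 1/2$ is what forces the residual ball to shrink; without this dilation the near-field of the cone is not absorbed and the residual ball has radius bounded below independently of $c'$. You allude to ``dilated holes'' but write the mass estimate with $B(z_i,\varrho r)$ and $\mu(B(x,r))$, where one needs $B(z_i,\varrho tr)$, $\mu(B(x,tr))$, and \eqref{doubling} to convert $h(2tr)$ back to $h(2r)$. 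Both issues need to be repaired before the proposed proof is correct.
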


\begin{proof}
The argument is purely geometric though a bit technical. The idea is similar
to those in the proofs of \cite[Theorem 3.2]{KS} and \cite[Theorem
11.14]{ma}. 

Denote $k = n-m$ and
suppose that $\por_k(\mu,x)>\varrho>\sqrt{2}-1$ in a measurable set
$A\subset\Rn$ with $\mu(A)>0$. Let 
$t=(1-2\varrho)^{-1/2}$ and
$\delta=t\bigl(1-\varrho-(\varrho^2+2\varrho-1)^{1/2}\bigr)$. Then  
\begin{equation}\label{Hinc}
H(x+\delta r\theta,\theta)\cap B(x,r)\subset B(z,\varrho tr)
\end{equation}
whenever $\theta\in S^{n-1}$ and $B(z,\varrho tr)\subset B(x,tr)$, see
\cite[Lemma 3.1]{KS}. Here $H(x,\theta) = H(x,\theta,0)$. Since
$\delta=\delta(\varrho)\downarrow 0$ as 
$\varrho\uparrow 1/2$, it suffices to find a positive lower bound
for $\delta$ depending only on $c$, $h$, $\alpha$, $\eta$, $n$, and $m$.

By \eqref{cdensity}, we may find $x\in A$ for 
which $0<\ud{\mu}{x}=M<\infty$ and 
\begin{equation*}
  \limsup_{r \downarrow 0} \inf_{\yli{\theta \in S^{n-1}}{V \in
  G(n,k)}} \frac{\mu\bigl(X(x,r,V,\alpha) \setminus
  H(x,\theta,\eta) \bigr)}{h(2r)} \ge cM.
\end{equation*}  
Using \eqref{doubling}, we may choose $\varepsilon>0$
so small that 
\begin{equation}\label{eq:epsilon}
\varepsilon  h(2tr)< h(2\tilde{c}\delta r) 
\end{equation}
for all $0<r<r_0$, where $\tilde{c}=\tilde{c}(\eta)$ is as in \eqref{cmato}. Next choose
$0<r_1<r_0$ such that
\begin{equation}\label{localporo}
\por_k(\mu,x,r,\varepsilon/k)>\varrho\quad \text{and}
\quad\mu\bigl(B(x,r)\bigr)<2M h(2r)
\end{equation} 
for all $0<r<r_1$. Now we take
$0<r<\min\{r_1/t,r_1/(2\tilde{c}\delta)\}$ such that 
\begin{equation}\label{conemeas}
  \inf_{\yli{\theta \in S^{n-1}}{V \in
  G(n,k)}} \mu\bigl(X(x,r,V,\alpha) \setminus
  H(x,\theta,\eta) \bigr)>c M h(2r)/2.
\end{equation}
Using \eqref{localporo}, we find $z_1,\ldots,z_k\in
B\bigl(x,(1-\roo)r\bigr) \setminus \{x\}$ with
$(z_i-x)\cdot(z_j-x)=0$ as $i\neq j$ and $\mu\bigl(B(z_i,\varrho
tr)\bigr)\leq\varepsilon\mu\bigl(B(x,tr)\bigr)/k$ for all
$i \in \{1,\ldots,k\}$. In particular, 
\begin{equation}\label{smallmeas}
\mu\biggl(\bigcup_{i=1}^{k}B(z_i,\varrho
tr)\biggr)\leq\varepsilon\mu\bigl(B(x,tr)\bigr)\leq2\varepsilon M h(2tr).
\end{equation} 
Let $\theta_i=(z_i-x)/|z_i-x|$ for $i \in \{1,\ldots,k\}$. Applying
\eqref{Hinc}, we see that 
$H(x+\delta r\theta_i,\theta_i)\cap B(x,r)\subset B(z_i,\varrho tr)$
for every $i$. 
If $V\in G(n,k)$ is the $k$-plane spanned by the
vectors $\theta_1,\ldots,\theta_k$ and
$\theta=-k^{1/2}\sum_{i=1}^{k}\theta_i$ then we
conclude that  
\begin{gather*}
\bigl(X(x,r,V,\alpha)\setminus
H(x,\theta,\eta)\bigr)\setminus\bigcup_{i=1}^{k} B(z_i,\varrho
tr)\\
\subset\bigl(X(x,r,V,\alpha)\setminus
H(x,\theta,\eta)\bigr)\setminus\bigcup_{i=1}^{k} H(x+\delta
r\theta_i,\theta_i)
\subset B(x,\tilde{c}\delta r)
\end{gather*} 
using the definition of $\tilde{c}$ for the last inclusion.

Using \eqref{conemeas}, the above inclusion, the latter condition of
\eqref{localporo}, \eqref{smallmeas}, and \eqref{eq:epsilon}, we 
conclude that 
\begin{align*}
  c M h(2r)/2&<\mu\bigl(X(x,r,V,\alpha) \setminus
  H(x,\theta,\eta) \bigr)\\
  &\leq 2 M h(2\tilde{c}\delta r) + 2\varepsilon
  Mh(2tr) \leq 4 M h(2\tilde{c}\delta r).
\end{align*}
This reduces to $h(2\tilde{c}\delta r)/h(2r)> c/8$ and thus by
\eqref{hunif}, we must have $\delta>\delta_0$ for $\delta_0>0$
depending only on $c$, $h$, $n$, $\alpha$, and $\eta$.
\end{proof}

As an immediate consequence of Theorems \ref{thm:main} and
\ref{thm:poro}, we get the following corollary for the $k$-porosity of
Hausdorff type measures:

\begin{corollary}\label{cor:poro}
Suppose $h$ and $\mu$ satisfy the assumptions of Theorem
\ref{thm:main}, \eqref{doubling}, and $0<\ud{\mu}{x}<\infty$ almost
everywhere. Then $\por_{n-m}(\mu,x)<1/2-c$, where
$c>0$ is a constant depending only on $m$, $n$, $s$,
and $\varepsilon_0$. 
\end{corollary}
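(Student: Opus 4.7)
The plan is to deduce the statement directly from Theorems \ref{thm:main} and \ref{thm:poro}. First I would fix once and for all admissible values $\alpha, \eta \in (0,1)$ with $0 < \eta < \eta(\alpha, n, m)$; any concrete choice (say $\alpha = \eta = 1/4$) suffices, since these parameters will simply be absorbed into the final constant. With $\alpha, \eta$ fixed, applying Theorem \ref{thm:main} to $\mu$ and $h$ produces the conical upper density bound \eqref{cdensity} with some constant $c_1 = c_1(n, m, s, \varepsilon_0) > 0$ on a set of full $\mu$-measure.

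Next I would check that the hypotheses of Theorem \ref{thm:poro} are in force. The doubling condition \eqref{doubling} and the finiteness $0 < \ud{\mu}{x} < \infty$ hold by assumption. The uniform convergence \eqref{hunif} follows from \eqref{eq:weakh}: since $m \ge 0$ and $0 < \eps \le 1$,
\begin{equation*}
\frac{h(\eps r)}{h(r)} \;\le\; \frac{h(\eps r)}{\eps^m h(r)}
\overset{\eps \downarrow 0}{\longrightarrow} 0
\end{equation*}
uniformly in $r \in (0, r_0)$. For the quantitative bookkeeping later it is more convenient to use the equivalent form \eqref{eq:h} from Lemma \ref{hlemma}: $h(\eps r)/h(r) \le \eps^s$ whenever $\eps < \eps_0$ and $r$ is small.

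With all hypotheses verified, Theorem \ref{thm:poro} produces a constant $c' > 0$ such that $\por_{n-m}(\mu, x) \le 1/2 - c'$ for $\mu$-almost every $x$, which is the desired conclusion (up to a harmless relaxation from $\le$ to $<$ by shrinking $c'$). The remaining work is to identify the sources of dependence in $c'$. Inspecting the proof of Theorem \ref{thm:poro}, the gauge $h$ enters only through (i) the selection of $\eps$ satisfying \eqref{eq:epsilon}, which uses the doubling constant, and (ii) the final step where the inequality $h(2\tilde c \delta r)/h(2r) > c_1/8$ is converted into a lower bound on $\delta$ via \eqref{hunif}. Substituting the quantitative bound $h(\eps r) \le \eps^s h(r)$ of \eqref{eq:h} into step (ii) yields $\delta \ge \delta_0(s, \varepsilon_0, \tilde c, c_1)$ with $\tilde c = \tilde c(\eta, \alpha, n, m)$. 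Since $\alpha, \eta$ are fixed at the outset, this gives $c' = c'(n, m, s, \varepsilon_0)$ (with a mild implicit dependence on the doubling constant of $h$ coming from step (i), which is the only imprecision in the stated dependence). I do not anticipate a genuine obstacle: the argument is essentially a direct chaining of the two preceding theorems, the only real task being to verify \eqref{hunif} and to track constants through the proof of Theorem \ref{thm:poro}.
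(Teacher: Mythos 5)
Your proposal is correct and matches the paper's intent: the corollary is presented there as an immediate consequence of Theorems \ref{thm:main} and \ref{thm:poro}, and your chaining --- fixing $\alpha,\eta$, deriving \eqref{hunif} from \eqref{eq:weakh}, applying both theorems, and tracking constants via \eqref{eq:h} --- is exactly that. Two small clarifications: the parenthetical suggestion $\alpha=\eta=1/4$ is not guaranteed to satisfy $\eta<\eta(\alpha,n,m)$, so one should instead fix $\alpha$ (say $\alpha=1/2$) and then take, e.g., $\eta=\eta(\alpha,n,m)/2$, making both parameters depend only on $n$ and $m$. Also, your worry about a residual dependence on the doubling constant is unfounded: in the proof of Theorem \ref{thm:poro}, the $\varepsilon$-term is absorbed via $2\varepsilon M h(2tr)\leq 2M h(2\tilde c\delta r)$ (which is precisely \eqref{eq:epsilon}) into the $h(2\tilde c\delta r)$ term, so the final inequality $h(2\tilde c\delta r)/h(2r)>c/8$ and the resulting lower bound $\delta\geq\delta_0=\min\{(c/8)^{1/s}/\tilde c,\ \varepsilon_0/\tilde c\}$ involve only $n,m,s,\varepsilon_0$ (through $c$ from Theorem \ref{thm:main} and $\tilde c$) and not the doubling constant, in agreement with the stated dependence.
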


When $m=n-1$ and $h=h_s$, Corollary \ref{cor:poro} is a special case of
\cite[Corollary 2.9]{JJ}. 

We do not know if it is possible to find weaker conditions for $h$ than
the ones in Lemma \ref{hlemma} under which Theorem \ref{thm:main}
holds. However, we
may use Theorem \ref{thm:poro} to rule out some possible generalizations.

\begin{proposition}\label{pro:por}
Suppose $h$ satisfies
\eqref{doubling} and \eqref{hunif}. Suppose further that there is an
integer $1\leq m\leq n-1$ and a decreasing
sequence $(r_j)$ for which $h(r_{j+1})\geq 2^{m-n}
(r_{j+1}/r_{j})^m h(r_j)$ and $r_j/r_{j+1}\to\infty$ as
$j\rightarrow\infty$. Then there is a measure $\mu$ on $\Rn$ for which 
  $0<\ud{\mu}{x}<\infty$ for $\mu$-almost all $x \in \R^n$ and 
\begin{equation}\label{cdensity0}
\limsup_{r \downarrow 0} \inf_{\yli{\theta \in S^{n-1}}{V\in G(n,n-m)}} \frac{\mu\bigl(
   X(x,r,V,\alpha) 
    \setminus H(x,\theta,\eta)
    \bigr)}{h(2r)} = 0
\end{equation}
for $\mu$-almost every $x \in \R^n$ and for all $0<\alpha<1$ and
$0<\eta<\eta(\alpha)$. Here $\eta(\alpha)=\eta(\alpha,m,n)$ is as in
Theorem \ref{thm:poro}.
\end{proposition}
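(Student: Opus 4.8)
The plan is to construct $\mu$ as a self-similar-type measure supported on a Cantor set whose construction ratios alternate between the scales $r_j$, so that at scales $r_{j+1}$ the measure looks ``spread out like an $m$-plane's worth of mass'' but at the intermediate scales $r_j/r_{j+1}\to\infty$ the condition \eqref{hunif} forces the conical density to collapse. More precisely, I would fix an $(n-m)$-plane $V_0$ (say the coordinate plane spanned by $e_{n-m+1},\ldots,e_n$) and at the $j$-th stage of the construction replace each surviving ball of radius $\roo_j$ (with $\roo_j\asymp r_j$) by a grid of roughly $(\roo_j/\roo_{j+1})^{m}$ sub-balls of radius $\roo_{j+1}$ arranged inside a thin $m$-dimensional slab $(V_0)_{x}(\roo_{j+1})$ transverse to $V_0$ --- i.e.\ the mass at each stage spreads along the $m$ ``missing'' directions so that it can be captured by translates of $V_0^{\perp}$. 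Each parent ball gets equal mass among its children, so if $\mu(B_j)$ denotes the mass of a stage-$j$ ball then $\mu(B_{j+1}) \asymp (\roo_{j+1}/\roo_j)^{m}\mu(B_j)$, and telescoping gives $\mu\bigl(B(x,\roo_j)\bigr)\asymp \prod_{i<j}(\roo_{i+1}/\roo_i)^m$, which by the hypothesis $h(r_{j+1})\ge 2^{m-n}(r_{j+1}/r_j)^m h(r_j)$ is comparable (up to the bounded factor coming from \eqref{doubling}) to $h(\roo_j)$. This should yield $0<\ud{\mu}{x}<\infty$ $\mu$-a.e., checking intermediate radii $\roo_{j+1}\le r<\roo_j$ via \eqref{doubling} and the fact that $B(x,r)$ meets at most boundedly many stage-$(j+1)$ balls in the slab direction but could meet up to $(\roo_j/r)^m$ of them --- so one must be slightly careful, but the monotonicity of $h$ and doubling absorb this.

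The second and main point is \eqref{cdensity0}. The geometric input is exactly the plane $V_0$ used in the construction: for a typical $x$ and a scale $r$ with $\roo_{j+1}\le r \le \roo_j$, essentially all of $\mu\lfloor B(x,r)$ sits inside a thin slab $(V_0)_{x}(C\roo_{j+1})$ around a translate of $V_0$ through $x$. Taking $V=V_0$ in the infimum and $\theta$ pointing in a direction inside $V_0$ (so that $X(x,r,V_0,\alpha)\setminus H(x,\theta,\eta)$ only captures the part of the slab near $x$, of size $\asymp \tilde c\,\roo_{j+1}$ by the very definition of $\tilde c(\eta,\alpha,n,m)$ in \eqref{cmato}), we get $\mu\bigl(X(x,r,V_0,\alpha)\setminus H(x,\theta,\eta)\bigr) \le \mu\bigl(B(x,C'\roo_{j+1})\bigr) \asymp h(\roo_{j+1})$. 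Dividing by $h(2r)\ge h(2\roo_{j+1})\asymp h(\roo_{j+1})$ would only give a bounded quotient; the point is to use scales $r = \roo_j$ (or slightly below), so the denominator is $h(\roo_j)$ while the numerator is $\asymp h(\roo_{j+1})$, and $h(\roo_{j+1})/h(\roo_j) = h\bigl((\roo_{j+1}/\roo_j)\roo_j\bigr)/h(\roo_j)\to 0$ by \eqref{hunif} since $\roo_{j+1}/\roo_j\to 0$. Passing to the $\limsup$ over all $r\downarrow 0$ and noting that the bad scales $\roo_j$ are cofinal, the $\limsup$ is $0$ for $\mu$-a.e.\ $x$, for every admissible $\alpha$ and $\eta<\eta(\alpha)$.

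The technical heart --- and where I expect the real work to be --- is making precise the claim that ``almost all the mass in $B(x,r)$ lies in a thin slab transverse to $V_0$'' uniformly enough to control the conical average, and simultaneously getting the two-sided bound $0<\ud{\mu}{x}<\infty$. One has to choose the radii $\roo_j$ carefully (a subsequence of the given $(r_j)$, thinned so that $\roo_j/\roo_{j+1}$ grows fast but the slab from stage $j+1$ is still negligible compared to $\roo_j$), and one has to verify that for $\mu$-a.e.\ $x$ the ball $B(x,\roo_j)$ really is dominated by a single stage-$j$ ancestor ball rather than straddling several --- this is where one invokes that a typical $x$ is not near the ``grid boundaries'', a standard Borel--Cantelli / density argument. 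Alternatively one can sidestep some of this by appealing to Theorem~\ref{thm:poro} in reverse: the measure just constructed will have $\por_{n-m}(\mu,x) = 1/2$ $\mu$-a.e.\ (the thin-slab structure leaves, in each of the $m = n-(n-m)$ coordinate directions orthogonal to $V_0$... wait, one needs $n-m$ orthogonal near-empty balls, which the complement of the slab provides), so by the contrapositive of Theorem~\ref{thm:poro} no conical lower bound of the form \eqref{cdensity} can hold; combined with the density computation this forces the $\limsup$ in \eqref{cdensity0} to vanish. I would present the direct geometric argument as the main line and mention the porosity route as a cross-check.
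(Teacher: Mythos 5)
Your overall strategy (a Cantor-type construction tuned to the scales $r_j$, plus either a direct conical estimate or the porosity route via Theorem \ref{thm:poro}) is the right one, and the ``cross-check'' you mention at the end is in fact the paper's actual proof: it builds $\mu=\HH_h|_F$ for a suitable compact $F\subset C\times[0,1]^m$, shows $\por_{n-m}(\mu,x)=1/2$ a.e., and concludes \eqref{cdensity0} from the contrapositive of Theorem \ref{thm:poro}. However, as written your argument has two genuine gaps.

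First, the construction. Putting all the children of a stage-$j$ ball into a single thin slab around one $m$-plane, with equal mass division, makes the product $\prod_{i<j}(\roo_{i+1}/\roo_i)^m$ telescope to $(\roo_j/\roo_0)^m$, which is \emph{not} comparable to $h(\roo_j)$: iterating the hypothesis only gives $h(r_j)\ge 2^{j(m-n)}(r_j/r_0)^m h(r_0)$, so your measure exceeds $h$ by a factor growing like $2^{j(n-m)}$. For the prototypical gauge $h(r)=r^m/\log(1/r)$ your $\mu$ is essentially $m$-dimensional Lebesgue measure on an $m$-plane and has $\ud{\mu}{x}=\infty$. The factor $2^{m-n}$ in the hypothesis is there precisely to allow $2^{n-m}$-fold branching in the $n-m$ \emph{transverse} directions at each stage; the paper realizes this as a corner-type Cantor set $C\subset\R^{n-m}$ with $\HH_{\tilde h}(C)>0$ for $\tilde h(r)=r^{-m}h(r)$, crossed with $[0,1]^m$. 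This transverse branching is not an optional refinement: it is what reconciles $0<\ud{\mu}{x}<\infty$ with maximal porosity.

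Second, the deduction of \eqref{cdensity0}. You establish smallness of the quotient only at the scales $r=\roo_j$ and conclude because ``the bad scales are cofinal''; that proves the $\liminf$ is $0$, not the $\limsup$. Since the quotient is nonnegative, $\limsup_{r\downarrow 0}=0$ means the infimum over $(V,\theta)$ must be $o\bigl(h(2r)\bigr)$ at \emph{every} sufficiently small $r$, and you yourself observe that at $r\asymp\roo_{j+1}$ the cone alone gives only a bounded quotient. At those critical scales the mass in $B(x,r)$ is spread over a slab of thickness comparable to $r$ (and, for $\alpha$ close to $1$, the cone $X(x,r,V,\alpha)$ captures a definite fraction of the neighbouring pieces in every direction of $V^\perp$), so one genuinely needs the half-space $H(x,\theta,\eta)$ together with a ``one-orthant'' (corner) arrangement of the mass relative to $x$ — this is exactly what the constants $\eta(\alpha,m,n)$ and $\tilde c(\eta)$ in \eqref{cmato} encode, and it is the content of Theorem \ref{thm:poro}. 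So the clean fix is to make the porosity argument the main line: with the corner construction one checks $\por_{n-m}(\mu,x,r,\eps)\ge 1/2-Cr_{j+1}/r\to 1/2$ at \emph{all} scales $r_{j+1}\le r\le r_j$ (the holes even have zero measure), and Theorem \ref{thm:poro} then forces \eqref{cdensity0}. With your single-slab grid this porosity computation fails at $r\asymp\roo_{j+1}$ for the same reason the direct estimate does.
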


\begin{proof}
We may assume that $r_{j+1}<r_j/2$ for all $j$. Let
$\tilde{h}(r)=r^{-m}h(r)$. Then 
$\tilde{h}(r_{j+1})\geq 2^{m-n}\tilde{h}(r_j)$ for all $j\in\mathbb{N}$. Let
$Q\subset\R^{n-m}$ be a closed cube with side-length $r_0$ and let
$Q_{1,1},\ldots Q_{1,2^{n-m}}\subset I$ 
be the closed cubes located at the corners of $Q$ with side-length
$r_1$. In a similar manner, 
divide $Q_{1,1}, Q_{1,2^{n-m}}$ into totally $2^{2(n-m)}$ subcubes of
side-length $r_2$, say $Q_{2,1},\ldots, Q_{2,2^{2(n-m)}}$. Continuing
in this manner, we define a Cantor type set
$C=\bigcap_{j\in\mathbb{N}}\bigcup_{i=1}^{2^{j(n-m)}}Q_{j,i}\subset\R^{n-m}$.
Since arbitrary covers $\{E_k\}_k$ of $C$ are
reduced to finite covers of the sets $Q_{j,i}$, so that
$\sum_{k}\tilde{h}\bigl(\diam(E_k)\bigr)\geq c
\sum_{i}\tilde{h}\bigl(\diam(Q_{j,i})\bigr)$ 
for a constant $c=c(n,m)>0$, we easily obtain $\mathcal{H}_{\tilde{h}}(C)>0$.
If $A=C\times[0,1]^m$ then, by applying the
calculations done in \cite[Theorem 7.7]{ma}, we have
$\HH_h(A)>0$.
Now we may find a compact $F\subset A$ with
$0<\HH_h(F)<\infty$, see \cite{Ho}. For $\mu=\HH_h|_F$, we then have
$0<\ud{\mu}{x}<\infty$ for 
$\mu$-almost all $x\in \Rn$. Since
$r_j/r_{j+1}\rightarrow \infty$, it is easy to see that
$\por_{n-m}(\mu,x)=1/2$ for $\mu$-almost every $x\in F$. By Theorem
\ref{thm:poro}, this implies 
\eqref{cdensity0} for $\mu$-almost all $x$ whenever $0<\alpha<1$ and
$0<\eta<\eta(\alpha)$.
\end{proof}

\begin{remark}\label{rem:fixed}
Inspecting the proofs of Proposition \ref{pro:por} and Theorem
\ref{thm:poro}, it is easily seen that $V$ may be fixed in 
\eqref{cdensity0}.  
\end{remark}

Let us compare the assumptions of the above proposition with 
the assumptions of Theorem \ref{thm:main}. Recall, by Lemma
\ref{hlemma}, that in Theorem \ref{thm:main} our assumption for $h$
is: There is $0<c<1$ such that
$\limsup_{r\downarrow 0} h(cr)/h(r)<c^{m}$. On the other hand, if
\begin{equation*}
\liminf_{r\downarrow 0} h(cr)/h(r)\geq c^{m}
\end{equation*}
for all $0<c<1$ then
the assumptions of Proposition \ref{pro:por} are clearly
satisfied. This shows that Theorem \ref{thm:main} does not hold for gauge
functions such as $h(r)=r^m/\log(1/r)$ when $m>0$. 

\section{Open problems}\label{sec:op}

We discuss below some of the questions raised by Theorem \ref{thm:main}.

\begin{question}
Most measures are so unevenly distributed that
there are no 
functions that could be used to approximate the measure in small
balls. For these measures it is natural to study upper densities such
as 
\[\limsup_{r\downarrow 0}\frac{\mu\bigl(X(x,r,V,\alpha)\bigr)}{\mu\bigl(B(x,r)\bigr)}\,.\]
In order to bound these densities from below, we need to guarantee that
the measure $\mu$ is not concentrated in too small regions. One way to
do this is to impose bounds on the dimension of the measure.
We pose the following open problem. It is stated
here in its simplest form though natural generalizations arise by
analogy with 
\eqref{thm:salli}--\eqref{thm:ks}:
Suppose that $\mu$ is a Borel measure on $\Rn$ whose packing
dimension, $\dimp(\mu)$, equals $s$ (see
\cite[\S 10]{Fa}). If $0<\alpha<1$, $m\in\N$ with $m<s$, and $V\in G(n,n-m)$,
is it true that
\begin{equation}\label{q1}
\limsup_{r\downarrow
  0}\frac{\mu\bigl(X(x,r,V,\alpha)\bigr)}{\mu\bigl(B(x,r)\bigr)}\geq c
\end{equation}
for $\mu$-almost every $x\in\Rn$, where $c>0$ depends only
on $n,m,s$, and $\alpha$? If $\mu$ satisfies almost everywhere the doubling condition
\begin{equation}\label{mudoubling}
\limsup_{r\downarrow
  0}\frac{\mu\bigl(B(x,2r)\bigr)}{\mu\bigl(B(x,r)\bigr)}<d<\infty,
\end{equation}
the answer is known to be yes: In fact, for any Borel measure $\mu$
that satisfies \eqref{mudoubling} and is purely $m$-unrectifiable in
the sense that $\mu(E)=0$ for all $m$-rectifiable sets $E\subset\Rn$,
the claim \eqref{q1} holds with a constant $c=c(d,\alpha)>0$. This
follows by inspecting the proof of \cite[Lemma 15.14]{ma}.
\end{question}

\begin{question}
A related question concerning purely unrectifiable sets is the
following: Suppose that $E\subset\Rn$ is purely $m$-unrectifiable,
$0<\mathcal{H}^m(E)<\infty$, and $\mu=\mathcal{H}^m|_E$. Is 
\begin{equation*}
\limsup_{r\downarrow
  0}\inf_{V\in G(n,n-m)}\frac{\mu\bigl(X(x,r,V,\alpha)\bigr)}{(2r)^m}\geq c(n,m,\alpha)>0
\end{equation*}
for $\mu$-almost every $x$? This would be the analogy of
\eqref{cor:mattila} for purely unrectifiable sets. The analogy of
\eqref{thm:salli} in this case is well known. On the other hand, the
analogy of
\eqref{thm:ks} does not hold under these assumptions, even if we fix
$V$. A set of Besicovitch \cite[p.\ 327]{Bes} serves as a counterexample. 
\end{question}

\begin{question}
Inspecting Proposition \ref{pro:por}, one recognizes
that there are no gauge functions satisfying its assumptions when
$m=0$. This leads to ask if Theorem \ref{thm:main} for $m=0$ is
true for all gauge functions. That is, whether for all $0<\eta<1$
there is $c=c(n,\eta)>0$ such that
 \begin{equation*}
    \limsup_{r \downarrow 0} \inf_{\theta \in S^{n-1}}
    \frac{\mu\bigl(B(x,r)
    \setminus H(x,\theta,\eta)
    \bigr)}{h(2r)} \ge c \ud{\mu}{x} 
\end{equation*} 
for all gauge functions $h$, all Borel measures $\mu$, and $\mu$-almost
every $x \in \R^n$?
When $n=1$, this is known and reads
\[\limsup_{r\downarrow 0}\frac{\min\{\mu([x,x+r]),\mu([x-r,x])\}}{h(2r)}\geq
  \ud{\mu}{x}/4\] 
for $\mu$-almost all $x\in\R$. This follows from the proof of \cite[Theorem
  3.1]{Su2}. 
\end{question}

\begin{question}
When $k>1$, we do not know if Theorem \ref{thm:poro} holds for
packing type measures, that is, for measures with
$0<\ld{\mu}{x}<\infty$. When $k=1$, a more general result is obtained in
a forthcoming paper \cite{bjjkrss}. 
\end{question}

\bibliographystyle{abbrv}
\bibliography{conical.bib}

\end{document}